\numberwithin{equation}{section}
\newtheorem{thm}{Theorem}[section]
\newtheorem{prop}[thm]{Proposition}
\newtheorem{lem}[thm]{Lemma}
\newtheorem{cor}[thm]{Corollary}
\theoremstyle{definition}
\newtheorem{Ass}[thm]{Assumption}
\newtheorem{rem}[thm]{Remark}
\DeclareMathOperator{\dist}{dist}
\DeclareMathOperator{\DIV}{div}
\newcommand{\R}{\mathbb{R}}
\newcommand{\N}{\mathbb{N}}
\newcommand{\p}{\partial}
\newcommand{\eps}{\varepsilon}
\newcommand{\supp}{\text{supp}}
\newcommand{\diff}{\mathop{}\!\mathrm{d}}
\newcommand{\curl}{\mathrm{curl}}
\newcommand{\grad}{\nabla}
\newcommand{\doublewidetilde}[1]{{%
  \mathpalette\double@widetilde{#1}%
}}
\newcommand{\double@widetilde}[2]{%
  \sbox\z@{$\m@th#1\widetilde{#2}$}%
  \ht\z@=.9\ht\z@
  \widetilde{\box\z@}%
}
\author{Christian Seis}
\address{Institut f\"ur Analysis und Numerik, Universit\"at M\"unster, Einsteinstr.~62, 48149 M\"unster, Germany}
\email{seis@uni-muenster.de}
\author{Emil Wiedemann}
\address{Department of Mathematics, Friedrich-Alexander-Universit\"at Erlangen-N\"urnberg, Cauerstr.~11, 91058 Erlangen, Germany}
\email{emil.wiedemann@fau.de}
\author{Jakub Woźnicki}
\address{Faculty of Mathematics, Informatics and Mechanics, University of Warsaw, Stefana Banacha 2, 02-097 Warsaw, Poland; Institute of Mathematics of Polish Academy of Sciences, Jana i J\k edrzeja \'Sniadeckich 8, 00-656 Warsaw, Poland}
\email{jw.woznicki@student.uw.edu.pl}
\begin{document}

\title[The 2D Viscosity Limit on a Bounded Domain]{Strong Convergence of Vorticities in the 2D Viscosity Limit on a Bounded Domain}

\begin{abstract}
In the vanishing viscosity limit from the Navier-Stokes to Euler equations on domains with boundaries, a main difficulty comes from the mismatch of boundary conditions and, consequently, the possible formation of a boundary layer. Within a purely interior framework, Constantin and Vicol showed that the two-dimensional viscosity limit is justified for any arbitrary but finite time under the assumption that on each compactly contained subset of the domain, the enstrophies are bounded uniformly along the viscosity sequence. Within this framework, we upgrade to local strong convergence of the vorticities under a similar assumption on the $p$-enstrophies, $p>2$. The key novel idea is the analysis of the evolution of the weak convergence defect.  
\end{abstract}

\keywords{Euler equations, Navier--Stokes equations, vanishing viscosity, transport equation}
\subjclass[2000]{35Q35, 35Q30, 35Q31}

\maketitle

\section{Introduction}
\noindent Let $\Omega\subset\R^2$ be a bounded domain with a smooth boundary. Consider a family $\{u^\nu\}_{\nu > 0}$ of solutions to the 2D homogeneous, incompressible Navier-Stokes system
\begin{equation}\label{sys:navier_stokes}
\begin{aligned}
 \partial_t u^\nu  + \DIV_x (u^\nu \otimes u^\nu ) + \nabla_x p^\nu  &= \nu\Delta_x u^\nu&&\quad\mbox{in }(0,T)\times \Omega, \\
    \DIV_x u^\nu  &= 0&&\quad\mbox{in }(0,T)\times \Omega,
    \end{aligned}
\tag{NS}
\end{equation}
When this system is equipped with the Dirichlet (no-slip) boundary condition $u(x,t)=0$ for all $x\in\partial\Omega$ and $t>0$, it is classically known (see for instance~\cite[Remark 5.7]{galdinotes}) that there exists a unique smooth solution globally in time provided $u^\nu(0,\cdot)\in L^2(\Omega)$ and is divergence-free. 

Taking the ${\mathrm{curl}}$ of both sides of the equation \eqref{sys:navier_stokes}, we obtain the vorticity formulation
\begin{equation}\label{sys:navier_stokes_vorticity_formulation}
\begin{aligned}
\partial_t \omega^\nu + u^\nu\cdot\nabla\omega^\nu = \nu\Delta\omega^\nu\quad\mbox{in }(0,T)\times \Omega,\\
    \omega^\nu = \mathrm{curl}(u^\nu)\quad\mbox{in }(0,T)\times \Omega.
    \end{aligned}
\tag{NSV}
\end{equation}
As $u^\nu$ can be recovered uniquely from $\omega^\nu$ under the Dirichlet boundary condition, the vorticity formulation gives a complete description of the evolution. The parabolic character, however, is impaired as the boundary condition is imposed on $u^\nu$ and no apparent boundary information is available for the vorticity $\omega^\nu$. We will get back to this fundamental issue shortly. 

The subject of this paper is to study the vanishing viscosity limit, i.e., the limit $\nu\to0$. Formally, one expects convergence to the Euler equations, which are given by setting $\nu=0$ in~\eqref{sys:navier_stokes}:
\begin{equation}\label{sys:euler}
\begin{aligned}
\partial_t u + \DIV (u\otimes u) + \nabla p = 0\quad\mbox{in }(0,T)\times \Omega,\\
    \DIV u = 0\quad\mbox{in }(0,T)\times \Omega,
\end{aligned}\tag{E}
\end{equation}
or, in vorticity formulation,
\begin{equation}\label{sys:euler_vorticity_formulation}
\begin{aligned}
   \partial_t \omega + u\cdot\nabla\omega = 0\quad\mbox{in }(0,T)\times \Omega,\\
    \omega = \mathrm{curl}(u)\quad\mbox{in }(0,T)\times \Omega.
\end{aligned}\tag{EV}
\end{equation}
As the order of the system has decreased after setting $\nu=0$, the boundary condition changes and is usually chosen as $u(x,t)\cdot n(x)=0$ for all $x\in\partial\Omega$ and $t\geq0$, where $n(x)$ denotes the outer unit normal at $x\in\partial\Omega$. This condition is encoded in requiring $u(t)\in L^2(\Omega)$ and $\DIV u = 0$.

We impose two key assumptions in this paper. The first concerns the initial data. On the one hand, we require that the initial kinetic energy remain finite, regardless of the viscosity, which is a physically reasonable condition. On the other hand, we assume that the initial vorticities converge locally strongly in the inviscid limit. This assumption is both natural and necessary for the objectives of the present study.
\begin{Ass}\label{ass:data}
    Let $p > 1$ be arbitrary. For the initial data, we impose $\DIV u^\nu_0 = 0$,
    \begin{equation*}
        \sup_{\nu>0}\|u^\nu_0\|_{L^2(\Omega)}<\infty 
    \end{equation*}
    and, setting $\omega^\nu:=\curl (u^\nu)$ and $\omega:=\curl(u)$,
    \begin{equation*}
        \omega^\nu_0\to\omega_0\quad\text{in $L^p_{loc}(\Omega)$}.
    \end{equation*}
\end{Ass}

Our second assumption concerns the generation of vorticity in the interior of the domain. We assume that it can be controlled over a finite time span, uniformly with respect to viscosity.

\begin{Ass}\label{ass:assumption_on_linfty_bounds_vorticity}
Let $p > 1$ be arbitrary. For every compact set $K\subset \Omega$, there exists a constant $C_K$ such that 
$$
\sup_{\nu>0}\sup_{t\in (0, T)}\int_{K}|\omega^\nu(t,x)|^p \diff x \leq 
C_K.
$$
\end{Ass}

The latter condition appeared first in a study by Constantin and Vicol \cite{CoVi18} and also appeared in \cite{CLNV19}. Before discussing its justification and consequences, we state our main result.

\begin{thm}\label{mainthm}
Let $\Omega\subset\R^2$ be a smooth bounded domain, and $0<T<\infty$.       
Under Assumptions~\ref{ass:data} and~\ref{ass:assumption_on_linfty_bounds_vorticity} restricted further to only values $2 < p < +\infty$, $\omega^\nu$ defined as a strong solution to \eqref{sys:navier_stokes_vorticity_formulation}, converges, up to a subsequence as $\nu\to0$, to a weak solution $\omega$ of the Euler equations \eqref{sys:euler_vorticity_formulation} strongly in $C_{loc}([0,T);L^q_{loc}(\Omega))$ for any $1\leq q < p$. In particular, it holds that
\begin{equation}\label{XX}
\lim_{\nu\to 0} \nu\int_0^{T'} \int_K |\nabla \omega^{\nu}|^2\, \diff x \diff t=0,
\end{equation}
for any compact $K\subset \Omega$ and $T'<T$.
\end{thm}
Here, by convergence in $C_{loc}([0,T);L^q_{loc}(\Omega))$ we mean that for any $\chi\in C_c^\infty(\Omega)$ and $0 < T' < T$, we have $\chi\omega^\nu\to\chi\omega$ in $C([0,T'];L^q(\Omega))$. 

Let us give some context and explanation. The viscosity limit problem has been under intense mathematical scrutiny for decades. When there are no physical boundaries (that is, $\Omega=\R^d$ or $\Omega=\mathbb{T}^d$, the flat $d$-dimensional torus) and the solution of Euler is sufficiently smooth, then the convergence can easily be obtained by a relative energy estimate as given, for instance, in~\cite[Section 4.4]{PLL96}. Other methods and stronger results for the regular case without boundary can be found in~\cite{Swann,Kato72,Const86,Mas07}.

At low regularity, when no `good' solution of Euler is available, virtually nothing is known about the viscosity limit. (What counts as `good' depends on the space dimension, but in any case essentially one space derivative of velocity is needed.) But even when there is a strong solution of Euler, things get vastly more complicated as soon as a physical boundary enters the scene. The reason for this is the mismatch between the boundary conditions: For the Navier--Stokes equations, the natural boundary condition is no-slip, which requires the fluid to remain at rest on the boundary. As a result, tangential flows near the boundary generate vortices that could eventually detach and move toward the interior. In contrast, the Euler equations are governed by no-penetration boundary conditions, which allow for non-zero tangential flows.

A famous criterion of Kato~\cite{Kato84} states that the transition from Navier--Stokes to Euler is valid if and only if
\begin{equation*}
   \lim_{\nu\to0} \nu\int_0^T\int_{\Gamma_\nu}|\nabla u^\nu(x,t)|^2\diff x\diff t=0,
\end{equation*}
where $\Gamma_\nu:=\{x\in\Omega: \dist(x,\partial\Omega)<\nu\}$ is a region near the boundary, which is  much thinner than $\sqrt\nu$, as would be the thickness of the boundary layer expected from parabolic scaling. Sophisticated research has aimed at identifying sufficient conditions on the data and the regularity of the boundary under which the Kato criterion can be guaranteed to hold at least for short time, see for instance~\cite{SamCaf1, SamCaf2, Mae14, BarNgu22}.

Another line of thought was introduced by Constantin and Vicol~\cite{CoVi18}, who studied only the interior behaviour in the spirit of our Assumption \ref{ass:assumption_on_linfty_bounds_vorticity}. Before discussing this assumption, we try to make the reader sensitive for the challenges that occur in the presence of boundaries.

The vorticity formulations~\eqref{sys:navier_stokes_vorticity_formulation} and~\eqref{sys:euler_vorticity_formulation} are genuine to the two-dimensional situation (indeed, in higher dimensions an additional vortex stretching term appears). This vorticity transport structure greatly facilitates the analysis. For instance, in the case without boundaries, say $\Omega=\R^2$, we can multiply the vorticity equation~\eqref{sys:navier_stokes_vorticity_formulation} with $\omega^\nu$ itself to obtain, after integration by parts, the so-called enstrophy balance equation
\begin{equation}\label{enstrophybalance}
    \int_{\R^2}|\omega^\nu(t)|^2\diff x+2\nu\int_0^t\int_{\R^2}|\nabla\omega^\nu|^2\diff x\diff s=\int_{\R^2}|\omega_0^\nu|^2\diff x,
\end{equation}
which gives rise to a uniform a priori bound on $\|\omega^\nu\|_{L^2}$ (and more generally on the $L^p$-norms for any $1\leq p\leq\infty$). Regarding the viscosity limit, then, using such a priori bounds and other transport techniques such as DiPerna-Lions theory~\cite{diperna1989ordinary}, one can show many things provided the initial vorticity is in $L^p$: convergence of a vanishing viscosity subsequence to a weak solution of Euler~\cite{DiPM87}, renormalization of the viscosity limit~\cite{CriSpi15,CCNS17}, and energy conservation~\cite{CheLop16}. 

In fact, the usual convergence proof via weak compactness, as in~\cite{DiPM87}, only gives weak convergence of the vorticities. In the absence of boundaries, this has recently been improved to strong convergence in~\cite{CCS21,NSW21,CDE22}. 

In the case with physical boundaries,~\eqref{enstrophybalance} will not generally be true any longer: Indeed, the dissipative term multiplied by $\omega^\nu$ and integrated in space becomes
\begin{equation*}
    \nu\int_\Omega \omega^\nu\Delta\omega^\nu\diff x=-\nu\int_\Omega |\nabla \omega^\nu|^2\diff x+\nu\int_{\partial\Omega}\omega^\nu n\cdot\nabla\omega^\nu\diff S,
\end{equation*}
and the surface integral need not vanish, as we know nothing about the vorticity on the boundary. It is this integral that gives precise mathematical meaning to the expression \emph{vorticity production at the boundary}. Consequently, we lose the a priori estimate~\eqref{enstrophybalance}, and any attempt to establish the viscosity limit with standard methods appears futile.

Indeed, if the vorticity produced at the boundary propagates into the bulk of the fluid, we cannot hope for the Euler equations to provide an accurate description of the flow. Constantin and Vicol~\cite{CoVi18} instead impose (the $L^2$ version of) Assumption~\ref{ass:assumption_on_linfty_bounds_vorticity} to prevent an uncontrolled accumulation of interior vorticity. It is interesting here to recall the study~\cite{Kelli2} of Kelliher, who proves that the validity of the vanishing viscosity limit is in fact equivalent to the blow-up of vorticity in any $L^p$ norm with $p>1$ along the sequence (unless the limiting Euler solution happens to have zero tangential component on the boundary). Assumption~\ref{ass:assumption_on_linfty_bounds_vorticity} imposes that this blow-up occur at the boundary of the domain, which is perfectly consistent with Kelliher's observation~\cite{Kelli1} of the necessary formation of a vortex sheet at the boundary in the viscosity limit.  

The question whether and to what quantitative extent any vorticity produced at the boundary can accumulate inside the bulk via a mechanism of boundary layer detachment is analytically still open, and neither are we aware of any numerical studies in this regard. Yet, the criterion~\ref{ass:assumption_on_linfty_bounds_vorticity} is in a certain sense complementary to the one of Kato, as it shifts the focus away from the boundary and towards the interior. Indeed,  Assumption~\ref{ass:assumption_on_linfty_bounds_vorticity} and our results on local $L^p$ convergence allow, in principle, for an uncontrollable behavior near the boundary: The constants $C_K$ could explode wildly as $K$ gets closer to $\partial\Omega$, and so can the rate of convergence in Theorem~\ref{mainthm} and Proposition~\ref{byproduct}. It is in this sense that the theory presented here, and previously in~\cite{CoVi18}, is purely \emph{interior}.

 Constantin and Vicol established strong convergence of the velocity fields and weak convergence of the vorticities. A similar result was then proved~\cite{CLNV19} under a $p=1$ version of Assumption~\ref{ass:assumption_on_linfty_bounds_vorticity} (where, in addition, concentrations have to be excluded in the interior). 
We summarize these results in a statement suitable for our purposes:

\begin{prop}\label{byproduct}
    Let $\Omega\subset\R^2$ be a smooth bounded domain, and $0<T<\infty$.
Under Assumptions~\ref{ass:data} and~\ref{ass:assumption_on_linfty_bounds_vorticity}, up to a subsequence, $u^\nu$, defined as a strong solution to \eqref{sys:navier_stokes}, converges strongly to a weak solution $u$ of the Euler equations \eqref{sys:euler} in $C([0,T];L^2_{loc}(\Omega))$.
\end{prop}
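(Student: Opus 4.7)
The proof combines standard a priori bounds with an Aubin-Lions type compactness argument. The key ingredients are the energy identity for \eqref{sys:navier_stokes}, the local vorticity bound from Assumption~\ref{ass:assumption_on_linfty_bounds_vorticity}, interior elliptic (Biot-Savart) regularity, and equicontinuity in time extracted directly from the equation.

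First I would establish the uniform bounds. Testing \eqref{sys:navier_stokes} against $u^\nu$ and using the no-slip condition yields
\[
\|u^\nu(t)\|_{L^2(\Omega)}^2 + 2\nu\int_0^t \|\nabla u^\nu\|_{L^2(\Omega)}^2 \diff s \leq \|u_0^\nu\|_{L^2(\Omega)}^2,
\]
so by Assumption~\ref{ass:data}, $u^\nu$ is uniformly bounded in $L^\infty(0,T;L^2(\Omega))$ and $\sqrt{\nu}\,\nabla u^\nu$ is uniformly bounded in $L^2(0,T;L^2(\Omega))$. Combining this with Assumption~\ref{ass:assumption_on_linfty_bounds_vorticity} and interior elliptic regularity applied to the div-curl system $\curl u^\nu = \omega^\nu$, $\DIV u^\nu = 0$ (localized by a cutoff $\chi \in C_c^\infty(\Omega)$), one obtains a uniform bound of $u^\nu$ in $L^\infty(0,T;W^{1,p}_{loc}(\Omega))$. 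By Rellich-Kondrachov, for each fixed $t$ the family $\{u^\nu(t)\}$ is precompact in $L^2_{loc}(\Omega)$.

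To obtain time regularity, I would test \eqref{sys:navier_stokes} against smooth, compactly supported, divergence-free vector fields $\phi$ on $\Omega$ (in 2D, realized as $\phi = \nabla^\perp\psi$ for $\psi \in C_c^\infty(\Omega)$), which eliminates the pressure. This yields
\[
\langle \partial_t u^\nu, \phi\rangle = \int_\Omega (u^\nu \otimes u^\nu):\nabla\phi \diff x + \nu\int_\Omega u^\nu \cdot \Delta\phi \diff x,
\]
giving a uniform bound on $\partial_t u^\nu$ in $L^\infty(0,T;H^{-s}_{loc}(\Omega))$ on divergence-free test fields, for some sufficiently large $s$. An Aubin-Lions-Simon argument then produces a subsequence with $u^\nu \to u$ strongly in $L^q(0,T;L^2_{loc}(\Omega))$ for any $q<\infty$. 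To upgrade this to $C([0,T];L^2_{loc}(\Omega))$ I would invoke Arzela-Ascoli: pointwise-in-time precompactness in $L^2_{loc}$ is already established, and equicontinuity in $L^2_{loc}$ follows by interpolating between the uniform $W^{1,p}_{loc}$ bound and the equicontinuity of $t\mapsto u^\nu(t)$ into the weaker space $H^{-s}_{loc}$.

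Once strong convergence $u^\nu \to u$ in $C([0,T];L^2_{loc}(\Omega))$ is in hand, passing to the limit in the weak formulation of \eqref{sys:navier_stokes} tested against compactly supported divergence-free $\phi \in C_c^\infty((0,T)\times\Omega)$ is routine: the nonlinear term $u^\nu\otimes u^\nu\to u\otimes u$ in $L^1_{loc}$, and the viscous contribution $\nu\int u^\nu\cdot\Delta\phi$ vanishes since $u^\nu$ is uniformly bounded in $L^2$ while $\nu\to 0$. The divergence-free constraint passes to the limit trivially, and the initial condition does so by Assumption~\ref{ass:data}. The only delicate point in this program is obtaining compactness in $C([0,T];L^2_{loc})$ rather than merely $L^q(0,T;L^2_{loc})$; this is the standard Arzela-Ascoli interpolation step above and requires no fundamentally new idea beyond the usual Aubin-Lions machinery, which is why this proposition only slightly refines \cite{CoVi18}.
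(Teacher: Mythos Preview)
Your proposal is correct and follows essentially the same route as the paper: uniform $L^\infty_t L^2_x$ energy bound, local $L^\infty_t W^{1,p}_x$ control on $u^\nu$ from the div--curl system and Assumption~\ref{ass:assumption_on_linfty_bounds_vorticity} (the paper implements this via the auxiliary Dirichlet problem in Lemma~\ref{cor:bounds_on_velocity_by_sobolev_embed}, you via interior Biot--Savart regularity), an $L^\infty_t$ bound on $\partial_t u^\nu$ in a negative-order dual space obtained by testing against compactly supported divergence-free fields, and then passage to the limit in the weak formulation. The one place where you take an unnecessary detour is the compactness step: since both your $W^{1,p}_{\mathrm{loc}}$ bound and your $\partial_t u^\nu$ bound are uniform in time (i.e., $L^\infty(0,T)$), the $p=\infty$, $q=\infty$ case of the Aubin--Lions lemma (Proposition~\ref{aubin_lions_lemma}) already yields compactness directly in $C([0,T];L^2(\Omega'))$, so the intermediate $L^q_t$ convergence followed by an Arzel\`a--Ascoli/interpolation upgrade can be skipped entirely---this is exactly what the paper does in Lemma~\ref{lem:strong_conv_of_velocity}.
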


We give a self-contained proof for the readers' convenience. Similarly to~\cite{CoVi18}, it relies on a localized form of the Calder\'on-Zygmund estimate relating velocity and vorticity (Lemma~\ref{cor:bounds_on_velocity_by_sobolev_embed} below). 

Our main Theorem~\ref{mainthm} states that it is possible to upgrade to strong convergence of the vorticities, just as in the case without boundaries, as long as $p>2$. Having strong convergence instead of only weak convergence means that oscillations in the vorticity distribution are ruled out, and that certain higher order moments
\[
I^{\nu}_n(t) = \int (\omega^{\nu}(t,\cdot))^n \diff x
\]
(with $n<p$) will be conserved in the inviscid limit. Equivalently, strong convergence entails that there is no anomalous dissipation of vorticity, see \eqref{XX}, which rules out fine scale mixing of the vorticity field. These observations are consistent with the Miller-Robert mean-field theory of   2D turbulence \cite{Miller90,Robert91}, see   \cite{CDE22} for further discussions.

 Due to the mentioned difficulties, the techniques that are available for the full space \cite{CCS21,NSW21,CDE22} do not apply. Instead, we need a key novel idea, which we consider to be interesting on its own: We consider the weak convergence defect $m$ and show that $m$ is a subsolution to the limiting vorticity transport equation (Lemma~\ref{L2}). The latter implies through some ingredients from DiPerna--Lions theory that $m=0$ on $\Omega$ (Lemma~\ref{L3}), and, thus, this shows strong convergence. 

We finally note that the assumption $p>2$ in Theorem \ref{mainthm} ensures that the velocity field is bounded, which will be crucial for our analysis. See Lemma \ref{L3} for details.



\section*{Acknowledgements.} This work is partially  funded by the Deutsche Forschungsgemeinschaft (DFG, German Research Foundation) under Germany's Excellence Strategy EXC 2044–390685587, Mathematics M\"unster: Dynamics--Geometry--Structure. 

Emil Wiedemann acknowledges support through DFG project no.~\mbox{525716336} within the DFG Priority Programme SPP 2410 ``Hyperbolic Balance Laws in Fluid Mechanics: Complexity, Scales, Randomness''. 

Jakub Woźnicki was supported by National Science Center, Poland through project no.\\ 2023/49/N/ST1/02737.

The authors would like to thank Felix Otto for valuable suggestions.

No data has been created or processed in this research.

\section{Preliminaries}\label{sec:prelim}
We recall that $\Omega\subset\R^2$ denotes a smooth bounded domain. In what follows, $x\in\Omega$ will always denote a spatial variable, while $t\in [0, T]$ is reserved for a time variable. For vector manipulations, we write $a\cdot b$ for a standard scalar product, whenever $a, b\in \R^d$. Similarly, by $A : B$, we denote the usual scalar product $\mathrm{tr}(A^T\, B)$ between two matrices $A, B\in \R^{d \times d}$. Throughout the paper, we shall denote by $p'$ a H\"{o}lder conjugate of an exponent $p\in [1, +\infty]$. We also remind the reader that by $\omega$ we will mean $\curl(u)$, which for $u\in C^1(\Omega; \R^2)$ reduces to 
$$
\omega := \curl(u) = \partial_{x_1}u_2 - \partial_{x_2}u_1.
$$
In this article, the operators $\nabla$, $\DIV$, and $\curl$ only act on the spatial variables.

We use the standard notation for Sobolev and Lebesgue functions and frequently do not distinguish between scalar- or vector-valued functions. With $C([0,T];L_{w}^p(\Omega))$ we denote the space of functions that are continuous from time into $L^p(\Omega)$ equipped with the weak topology. More precisely, we request the map 
\begin{equation*}
    [0,T]\to\R,\quad t\mapsto\int_\Omega f(t,x)\chi(x)\diff x
\end{equation*}
be continuous for every $\chi\in L^{p'}(\Omega)$.

For the two-dimensional Navier-Stokes equations~\eqref{sys:navier_stokes}, for any divergence-free initial data in $L^2(\Omega)$, there exists a unique solution that is smooth for all positive times. For the Euler equations, smoothness is not expected, and we need to define a weak notion of solution. For $0<T\leq\infty$, we call a divergence-free function $u\in L^\infty(0,T;L^2(\Omega))$ a weak solution of the Euler equations \eqref{sys:euler} with initial data $u_0$ if 
\begin{equation*}
    \int_0^T\int_\Omega \partial_t\psi\cdot u+\nabla\psi:u\otimes u \diff x\diff t +\int_\Omega u_0(x)\cdot\psi(0,x)\diff x =0
\end{equation*}
for all divergence-free vector fields $\psi\in C_c^\infty([0,T)\times\Omega)$. See~\cite{Wi2018} for a discussion of this notion of weak solution, particularly with respect to boundary effects. 

Taking the curl operator of the Euler equations, one arrives at the scalar vorticity equation~\eqref{sys:euler_vorticity_formulation}, whose weak form is
\begin{equation*}
    \int_0^T\int_\Omega \omega(\partial_t\psi+u\cdot \nabla\psi)\diff x\diff t+\int_\Omega \omega_0(x)\psi(0,x)\diff x=0
\end{equation*}
for every scalar $\psi\in C_c^\infty([0,T)\times\Omega)$.

We recall some notions of convergence: We say $f_n\to f$ in $C([0,T];L_{loc}^p(\Omega))$ if
\begin{equation*}
    \sup_{t\in[0,T]}\int_\Omega|f_n(t,x)-f(t,x)|^p\chi(x)\diff x\to0
\end{equation*}
for every $\chi\in C_c^\infty(\Omega)$. Note in particular that $f\in C([0,T];L_{loc}^p(\Omega))$ is defined, as an $L^p_{loc}(\Omega)$ function, for every (and not merely almost every) time.

At last, we introduce the standard mollification procedure. Let $\eta:\R^d \to \R$ be a standard regularizing kernel, i.e. $\eta$ is a smooth, non-negative function, compactly supported in a ball of radius one and fulfills $\int_{\mathbb{R}^d} \eta(x) \diff x = 1$. Then, we set $\eta_{\varepsilon}(x) = \frac{1}{\varepsilon^d} \eta\left(\frac{x}{\varepsilon}\right)$ and for arbitrary $f: \R^d \times [0,T] \to \mathbb{R}$, we define
$$
f^{\varepsilon}(t,x) = \int_{\R^d} \eta_{\varepsilon}(x-y) f(t,y) \diff y.
$$



\section{A Priori Estimates}
We begin our analysis with a very well-known property of the solutions to \eqref{sys:navier_stokes}.
\begin{lem}\label{lem:basic_bounds_for_velocity}
Let $\{u^\nu\}_{\nu>0}$ be a family of solutions to the \eqref{sys:navier_stokes} whose initial data is divergence-free and bounded in $L^2(\Omega)$, uniformly in $\nu>0$. Then, $\{u^\nu\}_{\nu >0}$ is uniformly bounded in $L^\infty(0,T;L^2(\Omega))$.
\end{lem}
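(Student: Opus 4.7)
The plan is to carry out the classical energy estimate obtained by testing \eqref{sys:navier_stokes} against $u^\nu$ itself. Since the solutions are smooth on $(0,T]$ and continuous down to $t=0$ with values in $L^2(\Omega)$, all the integrations by parts below are rigorously justified, and the Dirichlet boundary condition $u^\nu|_{\partial\Omega}=0$ together with $\DIV u^\nu=0$ will make three of the four resulting terms vanish.

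First, I would multiply the momentum equation by $u^\nu$ and integrate over $\Omega$. The transport term contributes
\[
\int_\Omega \DIV_x(u^\nu\otimes u^\nu)\cdot u^\nu\diff x=-\int_\Omega (u^\nu\otimes u^\nu):\nabla u^\nu\diff x+\int_{\partial\Omega}(u^\nu\cdot n)|u^\nu|^2\diff S,
\]
and, after symmetrization, the bulk integral equals $-\tfrac{1}{2}\int_\Omega u^\nu\cdot\nabla|u^\nu|^2\diff x=\tfrac12\int_\Omega(\DIV u^\nu)|u^\nu|^2\diff x=0$, while the boundary term vanishes because $u^\nu=0$ on $\partial\Omega$. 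The pressure term becomes
\[
\int_\Omega \nabla p^\nu\cdot u^\nu\diff x=-\int_\Omega p^\nu\,\DIV u^\nu\diff x+\int_{\partial\Omega}p^\nu\,u^\nu\cdot n\diff S=0,
\]
again by the divergence-free and no-slip conditions. Finally, integration by parts on the viscous term gives
\[
\nu\int_\Omega \Delta u^\nu\cdot u^\nu\diff x=-\nu\int_\Omega|\nabla u^\nu|^2\diff x,
\]
since the boundary contribution $\nu\int_{\partial\Omega}(n\cdot\nabla u^\nu)\cdot u^\nu\diff S$ vanishes by no-slip.

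Combining these identities yields the energy equality
\[
\frac{1}{2}\frac{d}{dt}\int_\Omega|u^\nu(t,x)|^2\diff x+\nu\int_\Omega|\nabla u^\nu(t,x)|^2\diff x=0,
\]
which, after integrating in time from $0$ to any $t\in[0,T]$ and dropping the non-negative dissipation, gives
\[
\|u^\nu(t,\cdot)\|_{L^2(\Omega)}^2\leq\|u^\nu_0\|_{L^2(\Omega)}^2.
\]
Taking the supremum over $t\in[0,T]$ and invoking the uniform $L^2$-bound on the initial data from Assumption~\ref{ass:data} concludes the proof.

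There is really no obstacle of substance here: the argument is entirely standard and the only thing one has to be a little careful about is justifying the boundary integrations, which is immediate from the smoothness of $u^\nu$ on $\Omega\times(0,T]$ together with the no-slip condition. The same computation of course also yields the additional bound $\nu\int_0^T\|\nabla u^\nu\|_{L^2(\Omega)}^2\diff t\leq\tfrac12\|u^\nu_0\|_{L^2(\Omega)}^2$, which, although not stated in the lemma, would be useful in the subsequent analysis.
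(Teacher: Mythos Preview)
Your proof is correct and follows exactly the paper's approach: the paper's proof is the single sentence ``Multiply the equation \eqref{sys:navier_stokes} by $u^\nu$ and integrate by parts to obtain the bound, using the Dirichlet boundary condition,'' and you have simply written out the details of that computation.
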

\begin{proof}
Multiply the equation \eqref{sys:navier_stokes} by $u^\nu$ and integrate by parts to obtain the bound, using the Dirichlet boundary condition.
\end{proof}
\begin{lem}\label{cor:bounds_on_velocity_by_sobolev_embed}
    For $1<p<\infty$, let $\{u^\nu\}_{\nu>0}$ be a sequence of strong solutions to the \eqref{sys:navier_stokes} and $\{\omega^\nu\}_{\nu > 0} = \{\mathrm{curl}(u^\nu)\}_{\nu > 0}$ satisfy Assumptions~\ref{ass:data} and~\ref{ass:assumption_on_linfty_bounds_vorticity}. Then, for any $\chi\in C^\infty_c(\Omega)$, $\{\chi\,u^\nu\}_{\nu > 0}$ is uniformly bounded in $L^\infty(0,T; W^{1,p}_{0}(\Omega))$. 
        In particular, if $p>2$,  $\{\chi\,u^\nu\}_{\nu > 0}$ is uniformly bounded in $L^\infty(0,T; C(\overline{\Omega}))$.
\end{lem}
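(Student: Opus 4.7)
My plan is to obtain the interior Sobolev estimate via the classical div--curl decomposition for compactly supported fields on $\R^2$, and then close the argument by a short bootstrap. Setting $v^\nu := \chi u^\nu$ and extending by zero to $\R^2$, a direct computation using $\DIV u^\nu = 0$ gives
\begin{equation*}
\DIV v^\nu = \nabla\chi\cdot u^\nu, \qquad \curl v^\nu = \chi\,\omega^\nu + \nabla^\perp\chi\cdot u^\nu,
\end{equation*}
and both right-hand sides are supported in the compact set $K := \supp\chi \subset\Omega$.

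Since $v^\nu$ is compactly supported, the identity $\Delta v^\nu = \nabla(\DIV v^\nu) - \nabla^\perp(\curl v^\nu)$ can be inverted by convolution with the two-dimensional Newtonian potential. The resulting representation expresses $\nabla v^\nu$ as a sum of Riesz transforms of $\DIV v^\nu$ and $\curl v^\nu$, which, by the Calder\'on--Zygmund theorem, yields
\begin{equation*}
\|\nabla v^\nu(t,\cdot)\|_{L^p(\R^2)} \leq C_p\bigl(\|\DIV v^\nu(t,\cdot)\|_{L^p(\R^2)} + \|\curl v^\nu(t,\cdot)\|_{L^p(\R^2)}\bigr)
\end{equation*}
for every $1<p<\infty$, uniformly in $t\in(0,T)$.

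Next I would estimate the right-hand side in two stages. For $1<p\leq 2$, the vorticity contribution $\chi\omega^\nu$ is controlled directly by Assumption~\ref{ass:assumption_on_linfty_bounds_vorticity} applied to $K=\supp\chi$, while the velocity terms are controlled by H\"older: since $\nabla\chi$ is smooth with compact support, and Lemma~\ref{lem:basic_bounds_for_velocity} supplies a uniform $L^\infty_tL^2$ bound on $u^\nu$, one obtains a uniform $L^\infty_tL^p$ bound on both $\nabla\chi\cdot u^\nu$ and $\nabla^\perp\chi\cdot u^\nu$. Together with Poincar\'e's inequality (applicable since $v^\nu\in W^{1,p}_0(\Omega)$), this settles the range $1<p\leq 2$. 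For $2<p<\infty$, the velocity terms are no longer immediately bounded in $L^p$, and here I would bootstrap: pick a second cut-off $\tilde\chi\in C_c^\infty(\Omega)$ with $\tilde\chi\equiv 1$ on a neighbourhood of $\supp\chi$, apply the case $p=2$ to $\tilde\chi u^\nu$ to get a uniform bound in $L^\infty_tW^{1,2}_0(\Omega)$, and invoke the 2D Sobolev embedding $W^{1,2}_0\hookrightarrow L^q$ for every $q<\infty$ to upgrade $u^\nu$ to a uniform $L^\infty_tL^q_{loc}$ bound on $K$. The velocity terms then close in $L^p$ for any $p<\infty$.

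The second, pointwise assertion for $p>2$ is then just Morrey's embedding $W^{1,p}_0(\Omega)\hookrightarrow C(\overline\Omega)$ in dimension two. The main subtlety is the rigorous justification of the div--curl representation; because $v^\nu$ has compact support in $\Omega$ and hence in $\R^2$ after zero extension, this reduces cleanly to the $\R^2$-global theory of Riesz transforms, and the boundary $\partial\Omega$ plays no role in the argument.
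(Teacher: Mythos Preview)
Your argument is correct and is, in fact, somewhat more direct than the route taken in the paper. You work with $v^\nu=\chi u^\nu$ itself: after zero-extension to $\R^2$ you invoke the whole-space identity $\Delta v^\nu=\nabla(\DIV v^\nu)+\nabla^\perp(\curl v^\nu)$ (the sign depends on the convention for $\nabla^\perp$, but this is immaterial) and the Calder\'on--Zygmund bound for Riesz transforms to get $\|\nabla v^\nu\|_{L^p}\lesssim\|\DIV v^\nu\|_{L^p}+\|\curl v^\nu\|_{L^p}$. The paper instead introduces an auxiliary potential $z^\nu$ solving the Dirichlet problem $\Delta z^\nu=u^\nu$ on $\Omega$, applies elliptic $W^{2,p}$ regularity on the bounded domain to $\curl(\chi z^\nu)$ and $\DIV(\chi z^\nu)$, and recovers $\chi u^\nu$ from $\chi z^\nu$ via the product rule. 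Both approaches share the same bootstrap structure: first establish a $W^{1,2}$ bound using the energy estimate on $u^\nu$ and the $L^2_{loc}$ vorticity bound (which follows from Assumption~\ref{ass:assumption_on_linfty_bounds_vorticity} when $p\ge 2$), then use the 2D embedding $W^{1,2}\hookrightarrow L^q$ for all $q<\infty$ on a slightly larger cutoff to upgrade the velocity terms to $L^p$. Your method has the advantage of bypassing the auxiliary $z^\nu$ entirely and making transparent that the boundary $\partial\Omega$ plays no role; the paper's method has the advantage of never leaving $\Omega$ and citing a single black-box elliptic regularity result. The final appeal to Morrey for $p>2$ is identical in both.
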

\begin{proof}
We first discuss the case $p\geq 2$, as $p< 2$ is handled in a similar but slightly simpler way.

Consider the Dirichlet problem
\begin{equation}\label{sys:artificial_laplace}
    \begin{split}
        \left\{\begin{array}{ll} \Delta z^\nu = u^\nu\text{ in }\Omega\\
        z^\nu = 0 \text{ on }\p\Omega.
        \end{array}\right.
    \end{split}
\end{equation}
By~\cite[Theorem 9.15]{gilbarg2015elliptic} and Lemma~\ref{lem:basic_bounds_for_velocity}, this problem has a solution in $L^\infty(0,T;W^{2,2}(\Omega))$, and the norm in this function space is bounded uniformly in $\nu$. 

Fix $\chi\in C^\infty_c(\Omega)$ and compute, using \eqref{sys:artificial_laplace}, 
\begin{align}\label{artificial_laplace_curl}
\Delta\curl(\chi z^\nu) = \nabla^\perp\chi\cdot u^\nu+2\nabla\nabla^\perp\chi:\nabla z^\nu+\Delta \nabla^\perp\chi\cdot z^\nu+\Delta\chi\,\curl z^\nu+2\nabla\chi\cdot \nabla\curl z^\nu+\chi\omega^\nu.
\end{align}
Using Assumption \ref{ass:assumption_on_linfty_bounds_vorticity} and Lemma \ref{lem:basic_bounds_for_velocity} we know that the right-hand side of \eqref{artificial_laplace_curl} is bounded in $L^\infty(0,T;L^2(\Omega))$, uniformly in $\nu$. Hence, by~\cite[Theorem 9.15]{gilbarg2015elliptic}, $\curl(\chi z^\nu)$ is bounded in $L^\infty(0,T;W^{2,2}(\Omega))$. 

Similarly, taking now the divergence instead of the curl, one gets
\begin{align}\label{artificial_laplace_div}
   \Delta(\DIV(\chi z^\nu)) = \Delta\nabla\chi\cdot z^\nu+\nabla\chi\cdot u^\nu+2\nabla\nabla\chi:\nabla z^\nu+\Delta\chi\,\DIV z^\nu +\chi\DIV u^\nu+2\nabla\chi\cdot\nabla\DIV z^\nu.
\end{align}
Again, the right-hand side of \eqref{artificial_laplace_div} is bounded in $L^\infty(0,T;L^2(\Omega))$, independently of $\nu$, meaning that $\DIV(\chi z^\nu)$ is bounded in $L^\infty(0,T;W^{2,2}(\Omega))$. (Note $\DIV u^\nu=0$.) Using the identity
\begin{align}\label{artificial_laplace_identity}
\Delta (\chi z^\nu) = \nabla\DIV(\chi z^\nu) + \nabla^{\bot}\curl(\chi z^\nu),
\end{align}
we deduce that $\chi\, z^\nu$ is bounded in $L^\infty(0, T; W^{3,2}(\Omega))$. Therefore,
\begin{align}\label{yetanotheridentity}
\chi u^\nu=\chi\Delta z^\nu=\Delta(\chi z^\nu)-\Delta\chi z^\nu-2\nabla\chi\cdot\nabla z^\nu
\end{align}
is bounded in $L^\infty(0,T;W^{1,2}(\Omega))$ and therefore (by virtue of the embedding $W^{1,2}\subset L^p$) also in $L^\infty(0,T;L^p(\Omega))$, always uniformly in $\nu$.

Owing to Assumption \ref{ass:assumption_on_linfty_bounds_vorticity}, because the argumentation so far applies to any choice of test function, a second glance at~\eqref{artificial_laplace_curl} reveals that $\curl(\chi z^\nu)$ is bounded in $L^\infty(0,T;W^{2,p}(\Omega))$, because the right hand side of~\eqref{artificial_laplace_curl} is, in fact, bounded in $L^\infty(0,T;L^p(\Omega))$.
Similarly, a re-examination of~\eqref{artificial_laplace_div} informs us that $\DIV (\chi z^\nu)$ is also bounded in $L^\infty(0,T;W^{2,p}(\Omega))$, whence from~\eqref{artificial_laplace_identity} we discover $\Delta(\chi z^\nu)$ to be bounded in $L^\infty(0,T;W^{1,p}(\Omega))$.
Therefore, the right-hand side of~\eqref{yetanotheridentity} admits a bound in $L^\infty(0,T;W^{1,p}(\Omega))$, uniform in viscosity, which completes the proof of the first statement.

     The second statement then follows by Morrey's inequality (as we work in 2D).

     As for the case $p < 2$, one can notice that by H\"{o}lder's inequality and Lemma \ref{lem:basic_bounds_for_velocity}, $\{u^\nu\}_{\nu > 0}$ is bounded in $L^\infty(0, T; L^p(\Omega))$, thus we can repeat the argument going from \eqref{sys:artificial_laplace}, but by deducing that $\{z^\nu\}_{\nu > 0}$ is bounded in $L^\infty(0, T; W^{2,p}(\Omega))$, and using the local $L^\infty(0, T; L^p(\Omega))$ bounds of $\{u^\nu\}_{\nu > 0}$ and $\{\omega^\nu\}_{\nu > 0}$.

\end{proof}

We now show that to leading order in $\nu$, the solution to the Navier-Stokes equation is a renormalized subsolution to the transport equation.

\begin{lem}\label{lem:needed_bounds_for_vorticity_p}
    Let $2<p<\infty$ and $1\leq q\leq p$. Consider the sequence $\{\omega^\nu\}_{\nu > 0}$ of solutions to \eqref{sys:navier_stokes_vorticity_formulation} satisfying Assumption \ref{ass:assumption_on_linfty_bounds_vorticity}. Then, for any nonnegative  function $\psi\in C^\infty_c([0,T)\times \Omega)$, it holds that
      \begin{equation}
      \label{4}
   0 \le \int_0^T\int_\Omega |\omega^{\nu}|^q \left(\partial_t \psi  + u^\nu\cdot \grad\psi\right)\, \diff x \diff t +\int_\Omega |\omega_0^{\nu}|^q \psi(0,\cdot)\, \diff x + O(T \nu).
      \end{equation}
    
\end{lem}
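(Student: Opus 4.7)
The plan is to multiply the vorticity equation \eqref{sys:navier_stokes_vorticity_formulation} by (a renormalization of) $\psi$, exploit the good sign of the dissipation term produced by the chain rule, and absorb the remaining viscous contribution into the $O(T\nu)$ error using Assumption~\ref{ass:assumption_on_linfty_bounds_vorticity}. Concretely, since $u^\nu$ and $\omega^\nu$ are smooth for $t > 0$, I would begin with a pointwise renormalized identity. For $q \geq 2$ the function $s \mapsto |s|^q$ is $C^2$, and direct application of the chain rule to \eqref{sys:navier_stokes_vorticity_formulation} gives
\begin{equation*}
\partial_t |\omega^\nu|^q + u^\nu \cdot \nabla |\omega^\nu|^q + q(q-1)\nu\, |\omega^\nu|^{q-2}|\nabla \omega^\nu|^2 = \nu\, \Delta |\omega^\nu|^q.
\end{equation*}
For $1 \leq q < 2$ the map $|s|^q$ is not $C^2$, so I would replace it by $\beta_\eps(s) := (s^2+\eps)^{q/2}$, derive the analogous identity (which in this case still has a nonnegative dissipation on the left-hand side since $\beta_\eps'' \geq 0$), and send $\eps \to 0^+$; the boundary terms during this passage vanish by dominated convergence using Assumption~\ref{ass:assumption_on_linfty_bounds_vorticity}. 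In either case, discarding the nonnegative dissipation yields the distributional inequality
\begin{equation*}
\partial_t |\omega^\nu|^q + \DIV\bigl(u^\nu |\omega^\nu|^q\bigr) \leq \nu\, \Delta |\omega^\nu|^q,
\end{equation*}
where I used $\DIV u^\nu = 0$.

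Next I would test this inequality against a nonnegative $\psi \in C_c^\infty([0,T) \times \Omega)$ and integrate by parts. Since $\psi$ has compact support in $\Omega$, no boundary terms from $\partial\Omega$ appear, and the temporal integration by parts produces the initial-data term $\int_\Omega |\omega_0^\nu|^q \psi(0,\cdot)\diff x$. Rearranging signs, one obtains
\begin{equation*}
0 \leq \int_0^T\!\!\int_\Omega |\omega^\nu|^q(\partial_t \psi + u^\nu \cdot \nabla \psi)\diff x\diff t + \int_\Omega |\omega_0^\nu|^q \psi(0,\cdot)\diff x + \nu \int_0^T\!\!\int_\Omega |\omega^\nu|^q \Delta \psi\, \diff x\diff t.
\end{equation*}

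It remains to estimate the last term. Because $\supp \psi \subset K$ for some compact $K \subset \Omega$, Assumption~\ref{ass:assumption_on_linfty_bounds_vorticity} and Hölder (on the finite-measure set $K$) give a uniform bound $\sup_{t\in(0,T)} \|\omega^\nu(t)\|_{L^q(K)}^q \leq C_{K,q}$. Hence
\begin{equation*}
\left|\nu \int_0^T\!\!\int_\Omega |\omega^\nu|^q \Delta\psi \diff x\diff t\right| \leq \nu T\, \|\Delta\psi\|_{L^\infty} \sup_{t\in(0,T)} \|\omega^\nu(t)\|_{L^q(K)}^q = O(T\nu),
\end{equation*}
which gives \eqref{4}.

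The only delicate step is the renormalization for $1 \leq q < 2$: one must verify that the limit $\eps \to 0$ preserves the sign of the dissipation (which is automatic since $\beta_\eps'' \geq 0$) and that the remaining terms converge without any integrability obstruction. Since $\omega^\nu$ is smooth, this is standard, but it is the only place where the argument requires genuine care.
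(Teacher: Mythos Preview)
Your proposal is correct and follows essentially the same route as the paper: renormalize the vorticity equation by $|\cdot|^q$, drop the nonnegative dissipation term, and bound the residual $\nu\int_0^T\int_\Omega|\omega^\nu|^q\Delta\psi$ by $O(T\nu)$ via Assumption~\ref{ass:assumption_on_linfty_bounds_vorticity}. The only (minor) difference is that you are a bit more explicit about the regularization $\beta_\eps$ for $1\le q<2$, whereas the paper simply invokes smoothness of $\omega^\nu$ to justify the computation in one line.
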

\begin{proof}
We multiply \eqref{sys:navier_stokes_vorticity_formulation} by $  |\omega^\nu|^{q-2}\,\omega^\nu\,\psi$ and integrate the equality by parts to obtain (after a  straightforward computation which is justified thanks to the smoothness of $\omega^\nu$)
    \begin{align}\label{renormnu}
      \frac{d}{dt} \int_\Omega |\omega^{\nu}|^q \psi\, \diff x +  \nu q (q-1) \int_\Omega |\omega^{\nu}|^{q-2}|\grad\omega^{\nu}|^2\psi\, \diff x  =   \int_\Omega |\omega^{\nu}|^q \left(\partial_t \psi + u^{\nu}\cdot \grad \psi    + \nu \Delta \psi\right)\,\, \diff x.
    \end{align}
    Thanks to Lemma \ref{cor:bounds_on_velocity_by_sobolev_embed} (which ensures integrability of the advective term) and Assumption \ref{ass:assumption_on_linfty_bounds_vorticity}, the viscosity-dependent term on the right-hand side is uniformly controlled for any $q\le p$: Indeed, if $K\subset\Omega$ is compact and contains the support of $\psi$, then
    \[
   \left| \nu \int_\Omega |\omega^{\nu}|^q \Delta \psi\, dx\right| \lesssim \nu \|\omega^{\nu}\|_{L^q(K)}^q \lesssim \nu \|\omega^{\nu}\|_{L^p(K)}^q \le \nu C_K^{q/p},
    \]
     and thus, using the nonnegativity of the dissipation term on the left-hand side and integrating in time, we conclude the  statement of the lemma.
\end{proof}

\section{Strong convergence of the velocity field}


The present section is devoted to proving the following lemma concerning the strong convergence of the velocity field.
\begin{lem}\label{lem:strong_conv_of_velocity}
    Let $1<p<\infty$ and $\{u^\nu\}_{\nu>0}$ be a sequence of solutions to the \eqref{sys:navier_stokes} and $\{\omega^\nu\}_{\nu > 0} = \{\mathrm{curl}(u^\nu)\}_{\nu > 0}$ satisfy Assumptions~\ref{ass:data} and~\ref{ass:assumption_on_linfty_bounds_vorticity}. Then, there exists \mbox{$u\in L^\infty(0, T; L^2(\Omega))$} such that (up to a subsequence)
    \begin{enumerate}[label=(S\arabic*)]
        \item $u^\nu \stackrel{*}{\rightharpoonup} u$ weakly* in $L^\infty(0, T; L^2(\Omega))$\label{conv:weak_star_l2_conv_of_velocity},
        \item $\chi \nabla u^\nu \stackrel{*}{\rightharpoonup} \chi \nabla u$ weakly* in $L^\infty(0, T; L^{p}(\Omega))$\label{conv:weak_star_sobolev_loc_conv_of_velocity}, for any $\chi\in C^\infty_c(\Omega)$,
        \item $\chi\,u^\nu \rightarrow \chi\,u$ strongly in $C([0, T];L^{2}(\Omega))$, for any $\chi\in C^\infty_c(\Omega)$\label{conv:strong_conv_of_velocity}.
            \end{enumerate}
            Moreover, if $p>2$, then $\chi\,u^\nu \rightarrow \chi\,u$ strongly in $C([0, T]\times \overline{\Omega})$, for any $\chi\in C^\infty_c(\Omega)$.
\end{lem}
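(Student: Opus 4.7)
The plan is to extract (S1) and (S2) directly from the a priori bounds of Section~3 via Banach--Alaoglu, and to obtain (S3) by an Aubin--Lions--Simon compactness argument; the final uniform statement then follows by replacing $L^2$ with $C(\overline\Omega)$ through Morrey's embedding.

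Claim (S1) is immediate from Lemma~\ref{lem:basic_bounds_for_velocity} and Banach--Alaoglu in the dual of the separable space $L^1(0,T;L^2(\Omega))$. For (S2), Lemma~\ref{cor:bounds_on_velocity_by_sobolev_embed} uniformly bounds $\chi u^\nu$ in $L^\infty(0,T;W^{1,p}(\Omega))$, so the identity $\chi\nabla u^\nu = \nabla(\chi u^\nu) - u^\nu\otimes\nabla\chi$ delivers a uniform $L^\infty(0,T;L^p(\Omega))$ bound on $\chi\nabla u^\nu$. Along the subsequence from (S1), $\chi u^\nu\to\chi u$ in the sense of distributions, so any weak-$*$ limit of $\chi\nabla u^\nu$ in $L^\infty(0,T;L^p(\Omega))$ must coincide with $\chi\nabla u$; uniqueness of the weak-$*$ limit then promotes this to convergence of the entire subsequence, independently of $\chi$.

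The heart of the proof is (S3). Combining the spatial bound from (S2) with the compact embedding $W^{1,p}(\Omega)\hookrightarrow\hookrightarrow L^2(\Omega)$ sets up Simon's theorem; what remains is a uniform bound on $\partial_t(\chi u^\nu)$ in some $L^r(0,T;H^{-s}(\Omega))$ with $r>1$ and $s$ sufficiently large. Formally, \eqref{sys:navier_stokes} gives
\[
\partial_t(\chi u^\nu) = -\chi\DIV(u^\nu\otimes u^\nu) + \nu\chi\Delta u^\nu - \chi\nabla p^\nu.
\]
After integration by parts against an $H^s$-test function, the convective term is easily controlled using Lemmas~\ref{lem:basic_bounds_for_velocity} and~\ref{cor:bounds_on_velocity_by_sobolev_embed}, and the viscous term is similarly controlled with an additional $O(\nu)$ prefactor. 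The real obstacle is the pressure; the cleanest remedy I see is to pair $\partial_t(\chi u^\nu)$ with the Leray projection $P(\chi\phi)$ of the test function, observing that $\int\partial_t u^\nu\cdot\nabla q=0$ (a consequence of $\DIV u^\nu=0$ and the no-slip condition) makes the $(I-P)$ part of $\chi\phi$ harmless, while $P(\chi\phi)$ being divergence-free with $P(\chi\phi)\cdot n=0$ on $\partial\Omega$ kills $\int\nabla p^\nu\cdot P(\chi\phi)$; the boundedness of $P$ on $H^s(\Omega)$ for smooth bounded $\Omega$ then supplies the estimate. As a backup, one may apply the interior Calder\'on--Zygmund estimate to $-\Delta p^\nu=\partial_i\partial_j(u^\nu_i u^\nu_j)$ to bound $p^\nu$ locally in $L^{p/2}$ and treat $\chi\nabla p^\nu=\nabla(\chi p^\nu)-p^\nu\nabla\chi$ directly.

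With the time-derivative bound in place, Simon's theorem extracts a (further) subsequence with $\chi u^\nu\to v$ in $C([0,T];L^2(\Omega))$; uniqueness of the weak-$*$ limit forces $v=\chi u$, establishing (S3). For the final statement, the compact embedding $W^{1,p}(\Omega)\hookrightarrow\hookrightarrow C(\overline\Omega)$ for $p>2$ in two dimensions (Morrey) permits me to rerun the same Simon argument with $C(\overline\Omega)$ replacing $L^2(\Omega)$, yielding $\chi u^\nu\to\chi u$ in $C([0,T]\times\overline\Omega)$ as required.
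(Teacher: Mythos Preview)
Your overall architecture matches the paper's: Banach--Alaoglu for (S1)--(S2), an Aubin--Lions argument for (S3), and Morrey for the final uniform statement. The difference, and the gap, lies in how you control the time derivative in the presence of the pressure.

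Your Leray-projection route does kill the pressure, but it transfers the difficulty to the viscous term. Since $P(\chi\phi)$ is \emph{not} compactly supported in $\Omega$ (one only has $P(\chi\phi)\cdot n=0$ on $\partial\Omega$, not $P(\chi\phi)=0$), the integration by parts
\[
\nu\int_\Omega \Delta u^\nu\cdot P(\chi\phi)\,\diff x = -\nu\int_\Omega \nabla u^\nu:\nabla P(\chi\phi)\,\diff x + \nu\int_{\partial\Omega}(\partial_n u^\nu)\cdot P(\chi\phi)\,\diff S
\]
leaves a boundary integral involving the tangential part of $\partial_n u^\nu$ on $\partial\Omega$. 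This is precisely the quantity that is \emph{not} controlled uniformly in $\nu$---it encodes the vorticity production at the boundary that the whole interior framework is designed to sidestep. Your backup (interior Calder\'on--Zygmund for $p^\nu$) runs into the same wall: interior elliptic estimates need some a~priori bound on $p^\nu$ in a weak norm on a slightly larger set, and no $\nu$-uniform bound is available, again because the Neumann data for the pressure involve $\nu\,n\cdot\Delta u^\nu$ on $\partial\Omega$.

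The paper's remedy is simple and worth internalizing: rather than projecting onto the divergence-free fields on all of $\Omega$, one tests \eqref{sys:navier_stokes} against $\psi\in W^{1,p'}_{0,\sigma}(\Omega')$ for a Lipschitz subdomain $\Omega'\subset\subset\Omega$. Such $\psi$ are both divergence-free (so the pressure vanishes) and compactly supported in $\Omega$ (so every integration by parts stays away from $\partial\Omega$, and only the local bounds from Lemma~\ref{cor:bounds_on_velocity_by_sobolev_embed} are needed). This yields $\partial_t u^\nu$ bounded in $L^\infty(0,T;(W^{1,p'}_{0,\sigma}(\Omega'))^*)$, and Aubin--Lions (Proposition~\ref{aubin_lions_lemma}) with $X_0=W^{1,p}(\Omega')$, $X=L^2(\Omega')$, $X_1=(W^{1,p'}_{0,\sigma}(\Omega'))^*$ closes the argument exactly as you intended.
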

\begin{proof}
    The \ref{conv:weak_star_l2_conv_of_velocity} and \ref{conv:weak_star_sobolev_loc_conv_of_velocity} convergences follow immediately from the Banach-Alaoglu theorem and the bounds from Lemma \ref{lem:basic_bounds_for_velocity} and Lemma \ref{cor:bounds_on_velocity_by_sobolev_embed}. For~\ref{conv:weak_star_sobolev_loc_conv_of_velocity}, one should note additionally that it suffices to consider a countable set of cutoff functions $\chi$, so that the subsequence may be selected via a diagonal argument.
    
    To obtain \ref{conv:strong_conv_of_velocity}, fix an arbitrary Lipschitz subdomain $\Omega'\subset\Omega$ such that $\overline{\Omega'}\subset\Omega$, and define
    $$
    W^{1,p}_{0, \sigma}(\Omega') := \text{closure of $\{v\in C^\infty_c(\Omega'),\, \DIV v = 0\}$ in the $W^{1,p}(\Omega')$ norm}.
    $$  
    Fix $\psi\in L^1(0, T; W^{1,p'}_{0,\sigma}(\Omega'))$, multiply \eqref{sys:navier_stokes} by it, and integrate by parts to obtain
    \begin{align}
        \int_0^T\int_{\Omega'}\p_t u^\nu\cdot\psi\diff x\diff t = \int_0^T\int_{\Omega'}u^\nu\otimes u^\nu : \nabla\psi\diff x\diff t - \nu\int_0^T\int_{\Omega'}\nabla u^\nu : \nabla \psi\diff x\diff t.
    \end{align}
    Using H\"{o}lder's inequality, we get
    \begin{align*}
        \left|\int_0^T\int_{\Omega'}u^\nu\otimes u^\nu : \nabla\psi\diff x\diff t\right|&\lesssim \|u^\nu\|_{L^\infty((0, T); L^{2p}(\Omega'))}^2\|\psi\|_{L^1(0, T; W^{1,p'}_{0,\sigma}(\Omega'))},\\
        \left|\nu\int_0^T\int_{\Omega'}\nabla u^\nu : \nabla \psi\diff x\diff t\right| &\lesssim \nu\|u^\nu\|_{L^\infty(0, T; W^{1,p}(\Omega'))}^2\|\psi\|_{L^1(0, T; W^{1,p'}_{0,\sigma}(\Omega'))},
    \end{align*}
    which, by Lemma \ref{cor:bounds_on_velocity_by_sobolev_embed} and the embedding $W^{1,p}(\Omega')\subset L^{2p}(\Omega')$, implies that $\{\p_t u^\nu\}_{\nu > 0}$ is bounded in $L^\infty(0, T; (W^{1,p'}_{0,\sigma}(\Omega'))^*)$. Since $W^{1, p}(\Omega')$ embeds compactly into $L^2({\Omega'})$, which embeds continuously into $(W^{1,p'}_{0,\sigma}(\Omega'))^*$, we may use the Aubin-Lions Lemma \ref{aubin_lions_lemma} to deduce
    $$
    u^\nu \rightarrow u \text{ strongly in }C([0, T]; L^2({\Omega'})). 
    $$
    This shows \ref{conv:strong_conv_of_velocity}.

    The `moreover' part about uniform convergence follows in the same way, using the improved embedding $W^{1,p}(\Omega')\subset C(\overline{\Omega'})$ when $p>2$.
\end{proof}

\begin{rem}
In fact, using the weak convergence of  \ref{conv:weak_star_l2_conv_of_velocity}, we may pass to the inviscid limit in the weak formulation of the incompressibility condition in the Navier--Stokes problem, and we find that the limit vector field $u$ is itself divergence-free, even when extended trivially to all of $\R^2$. 
As a consequence,  $u$ automatically satisfies the boundary condition $u\cdot n=0$ on $\partial\Omega$ in a weak sense, cf.~\cite[Theorem III.2.3]{galdi2011introduction}.
\end{rem}

{\em Proof of Proposition~\ref{byproduct}.} 
From $u^\nu\to u$ in $C([0,T];L^2_{loc}(\Omega))$ and $u\in L^\infty(0,T;L^2(\Omega))$ is divergence-free, it follows almost immediately that $u$ is a weak solution. Indeed, let $\psi\in C_c^\infty([0,T)\times\Omega)$ be divergence-free and choose $\chi\in C_c^\infty(\Omega)$, such that $\chi=1$ on the spatial support of $\psi$. Then, on said support, $u^\nu\to u$ strongly in $L^2$. This allows to pass to the limit $\nu\to 0$ in all integrals appearing in the weak formulation, including the nonlinear one $\int_0^T\int_\Omega \nabla\psi:u^\nu\otimes u^\nu\diff x\diff t$.

\qed

\section{Weak convergence of the vorticity and the renormalization of the limiting equation}

\begin{lem}\label{L1}
    Let $p > 2$. Consider the sequence $\{\omega^\nu\}_{\nu > 0}$ of solutions to \eqref{sys:navier_stokes_vorticity_formulation} satisfying Assumptions~\ref{ass:data} and~\ref{ass:assumption_on_linfty_bounds_vorticity}. Then, there exists $\omega\in L^\infty(0, T; L^p_{loc}(\Omega))$ such that (up to a subsequence)
    \begin{align}\label{conv:weak_conv_vorticity}
    \chi\omega^\nu \to \chi\omega \quad\text{in $C([0, T]; L_w^p(\Omega))$ for each $\chi\in C_c^\infty(\Omega)$},
    \end{align}
    and it satisfies \eqref{sys:euler_vorticity_formulation} in the sense of distributions, with $u$ defined in Lemma \ref{lem:strong_conv_of_velocity}. Moreover, $\omega$ is renormalized, and thus, in particular,
        \begin{align}\label{eq:renormalized_weak_formulation_vorticity}
        \int_{0}^T\int_{\Omega}|\omega|^q\,\left(\p_t\psi +  u\cdot\nabla\psi\right)\diff x\diff t  + \int_{\Omega}|\omega_0|^q\psi(0, x)\diff x =0,
    \end{align}
    for any $1\leq q\leq p$ and $\psi\in C_c^{\infty}([0,T)\times \Omega)$.
\end{lem}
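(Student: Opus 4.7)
The plan is to first extract a subsequential weak-$\ast$ limit via Banach--Alaoglu and a diagonal exhaustion, upgrade it to continuity in time by an equicontinuity argument, pass to the limit in the weak formulation of~\eqref{sys:navier_stokes_vorticity_formulation}, and finally invoke DiPerna--Lions theory to obtain renormalization. Assumption~\ref{ass:assumption_on_linfty_bounds_vorticity} yields that $\chi\omega^\nu$ is uniformly bounded in $L^\infty(0,T;L^p(\Omega))$ for each $\chi\in C_c^\infty(\Omega)$; Banach--Alaoglu combined with a diagonal argument over a countable family of cut-offs exhausting $\Omega$ produces a subsequence and a limit $\omega\in L^\infty(0,T;L^p_{\mathrm{loc}}(\Omega))$ with $\chi\omega^\nu\wstar\chi\omega$ in $L^\infty(0,T;L^p(\Omega))$. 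To improve this to $C([0,T];L^p_w(\Omega))$, I would test~\eqref{sys:navier_stokes_vorticity_formulation} classically against $\chi\phi$ for $\phi\in C_c^\infty(\Omega)$, obtaining
\[
\frac{\diff}{\diff t}\int_\Omega \omega^\nu\,\chi\phi\,\diff x \;=\; \int_\Omega \omega^\nu\, u^\nu\cdot\grad(\chi\phi)\,\diff x + \nu\int_\Omega \omega^\nu\,\Delta(\chi\phi)\,\diff x,
\]
whose right-hand side is uniformly bounded in $\nu$ by Lemmas~\ref{lem:basic_bounds_for_velocity}--\ref{cor:bounds_on_velocity_by_sobolev_embed} and Assumption~\ref{ass:assumption_on_linfty_bounds_vorticity}. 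Arzel\`a--Ascoli applied to these equicontinuous scalar maps, together with density of test functions in $L^{p'}$, yields~\eqref{conv:weak_conv_vorticity} along a further subsequence.

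To identify the limit equation, let $\psi\in C_c^\infty([0,T)\times\Omega)$ and pick $\chi\in C_c^\infty(\Omega)$ equal to $1$ on $\mathrm{supp}_x\psi$. The weak formulation of~\eqref{sys:navier_stokes_vorticity_formulation} reads
\[
\int_0^T\!\!\int_\Omega \omega^\nu\bigl(\p_t\psi + u^\nu\cdot\grad\psi + \nu\Delta\psi\bigr)\diff x\diff t + \int_\Omega \omega_0^\nu\,\psi(0,\cdot)\,\diff x = 0.
\]
The linear terms pass to the limit by~\eqref{conv:weak_conv_vorticity} and Assumption~\ref{ass:data}; the viscous term vanishes thanks to the uniform local $L^p$-bound on $\omega^\nu$; the advective term passes by combining the weak convergence $\chi\omega^\nu\wstar\chi\omega$ with the strong (indeed uniform) convergence $\chi u^\nu\to\chi u$ from Lemma~\ref{lem:strong_conv_of_velocity} (permitted because $p>2$). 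This identifies $(u,\omega)$ as a distributional solution of~\eqref{sys:euler_vorticity_formulation}.

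For renormalization I appeal to the DiPerna--Lions theory~\cite{diperna1989ordinary}. The divergence-free limit $u$ satisfies $\grad u\in L^\infty(0,T;L^p_{\mathrm{loc}}(\Omega))$ by Lemma~\ref{lem:strong_conv_of_velocity}~\ref{conv:weak_star_sobolev_loc_conv_of_velocity}, and the distributional solution $\omega$ belongs to $L^\infty(0,T;L^p_{\mathrm{loc}}(\Omega))$. The classical commutator lemma, applied after localizing with a cut-off and mollifying in space, then yields
\[
\p_t\beta(\omega) + u\cdot\grad\beta(\omega) = 0 \quad \text{in } \mathcal{D}'((0,T)\times\Omega)
\]
for every $\beta\in C^1(\R)$ of at most $p$-th power growth, in particular $\beta(r)=|r|^q$ with $1\le q\le p$. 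The initial datum is recovered from the continuity in time of the first step together with $\omega_0^\nu\to\omega_0$ in $L^p_{\mathrm{loc}}$, which gives $\omega(0)=\omega_0$, and~\eqref{eq:renormalized_weak_formulation_vorticity} follows by integrating the renormalized equation against $\psi$. I expect the chief technical subtlety to be running the DiPerna--Lions commutator argument on a bounded domain where only interior regularity of $u$ is available; this is circumvented by performing all manipulations inside the spatial support of $\chi$, so that only interior bounds are ever invoked.
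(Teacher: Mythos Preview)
Your proposal is correct and follows essentially the same route as the paper: weak-$\ast$ compactness from Assumption~\ref{ass:assumption_on_linfty_bounds_vorticity}, a uniform bound on $\partial_t(\chi\omega^\nu)$ in a negative space to gain time continuity, weak--strong pairing to pass the advective term, and DiPerna--Lions renormalization localized inside $\Omega$. The only cosmetic differences are that the paper packages the time-continuity step via Aubin--Lions in $W^{-s,2}$ rather than Arzel\`a--Ascoli on scalar pairings, and that it handles the initial datum in the renormalized identity by extending the equation to negative times (Appendix~\ref{proof_renormalization}) and approximating $|r|^q$ by $\min(|r|^q,M)$, a truncation step you should make explicit since the commutator argument requires $\beta'$ bounded before one passes to $\beta(r)=|r|^q$.
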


\begin{proof}
    The convergence $\chi\omega^\nu\to\chi\omega$ in the weak* topology of $L^\infty(0,T;L^p(\Omega))$ is already established in  \ref{conv:weak_star_sobolev_loc_conv_of_velocity}.
    For the weak continuity in time, notice
    \begin{equation*}
        \partial_t (\chi\omega^\nu)=\nu\chi\Delta\omega^\nu-\chi u^\nu\cdot\nabla\omega^\nu,
    \end{equation*}
    whence we will obtain an estimate for $\partial_t(\chi \omega^\nu)$ as follows:
Let $\phi\in W_0^{s,2}(\Omega)$, for some $s>0$ yet to be determined, then
    \begin{equation*}
        \left|\int_\Omega \partial_t(\chi\omega^\nu)\phi\diff x\right|\leq \nu\left|\int_\Omega \chi\Delta\omega^\nu\phi\diff x\right|+\left|\int_\Omega\chi\DIV(u^\nu\omega^\nu)\phi\diff x\right|=:\nu I_1+I_2.
    \end{equation*}
 Let $K:=\supp(\chi)$. Then on the one hand, keeping in mind Assumption~\ref{ass:assumption_on_linfty_bounds_vorticity},
    \begin{equation*}
        I_1=\left|\int_\Omega \omega^\nu\Delta(\chi\phi)\diff x\right|\leq C_K^{1/p}\|\chi\|_{C^2}\|\phi\|_{W^{2,p'}}.
    \end{equation*}
    On the other hand,
    \begin{equation*}
        I_2=\left|\int_\Omega \nabla(\chi\phi)\cdot u^\nu\omega^\nu\diff x\right|\leq C_K^{1/p}
    \|u^\nu\|_{L^{p'}(K)}    \|\chi\|_{C^1}\|\phi\|_{C^1}.
    \end{equation*}
Note that $\|u^\nu\|_{L^\infty}$ is uniformly (in $\nu$) bounded thanks to Lemma~\ref{cor:bounds_on_velocity_by_sobolev_embed}. Therefore, both $I_1$ and $I_2$ can be estimated by $\|\phi\|_{W^{s,2}}$ for sufficiently large $s$ (in fact, $s=2$ will do as $p>2$ and thus $p'<2$). As these estimates are uniform in time, we have thus showed  
    \begin{equation*}
        \sup_{0<\nu<1}\|\partial_t \omega^\nu\|_{L^\infty(0,T;W^{-s,2}(\Omega))}<\infty.
    \end{equation*}
    From the Aubin-Lions lemma~\ref{aubin_lions_lemma} with $X_0=L^p(\Omega)$, $X=X_1=W^{-s,2}(\Omega)$, we deduce $\chi\omega^\nu\to\chi\omega$ in $C([0,T];W^{-s,2}(\Omega))$. Since $(\chi\omega^\nu)$ is bounded in $L^\infty(0,T;L^p(\Omega))$ and $W_0^{s,2}$ is dense in $L^{p'}(\Omega)$, we even obtain convergence in $C([0,T];L^p_w(\Omega))$ as claimed.

    It remains to discuss the limiting equation. Passing to the limit in the distributional formulation of $\omega^{\nu} = \curl (u^{\nu})$ we see that $\omega = \curl (u)$.   To see that $\omega$ is in fact a distributional solution of \eqref{sys:euler_vorticity_formulation}, multiply \eqref{sys:navier_stokes_vorticity_formulation} by an arbitrary \mbox{$\psi\in C^\infty_c([0, T)\times \Omega)$} and integrate by parts to get
    \begin{align}\label{weak_formulation_vorticity_nu}
        \int_0^T\int_{\Omega}\omega^\nu\,\p_t\psi + (u^\nu\cdot\nabla\psi)\,\omega^\nu\diff x\diff t = -\nu\int_0^T\int_{\Omega}\omega^\nu\,\Delta\psi\diff x\diff t - \int_{\Omega}\omega_0^\nu\,\psi(0, x)\diff x.
    \end{align}
   Thanks to Lemma  \ref{cor:bounds_on_velocity_by_sobolev_embed} and the compact embedding $W^{1,p} \subset L^{p'}$ for $p>4/3$, we can easily pass to the limit in all of the terms in \eqref{weak_formulation_vorticity_nu} and get
    \begin{align}\label{weak_formulation_vorticity}
        \int_0^T\int_{\Omega}\omega\,\p_t\psi + (u\cdot\nabla\psi)\,\omega\diff x\diff t = -\int_{\Omega}\omega_0\,\psi(0, x)\diff x.
    \end{align}
    The renormalization property is a consequence of the DiPerna-Lions theory in \cite{diperna1989ordinary}, therefore we skip the argument here. As the proof contains some minor changes, we refer the interested reader to Appendix \ref{proof_renormalization}.
\end{proof}

\begin{cor}\label{ctycor}
Under the assumptions of Lemma~\ref{L1}, for any $\chi\in C_c^\infty(\Omega)$, 
\begin{equation*}
   \chi\omega\in C([0,T];L^p(\Omega)).
\end{equation*}

\end{cor}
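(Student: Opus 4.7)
The plan is to combine the weak continuity $\chi\omega\in C([0,T];L_w^p(\Omega))$ supplied by Lemma~\ref{L1} with continuity of the norm $t\mapsto\|\chi\omega(t)\|_{L^p(\Omega)}$. Since $L^p(\Omega)$ is uniformly convex for $1<p<\infty$, its Radon--Riesz property upgrades weak convergence paired with norm convergence to strong convergence, which is exactly the statement of the corollary.

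To obtain the norm continuity, I would exploit the renormalization identity \eqref{eq:renormalized_weak_formulation_vorticity} with exponent $q=p$. Using $\DIV u=0$, it tells us that $|\omega|^p$ is a distributional solution of
\begin{equation*}
    \partial_t|\omega|^p+\DIV\!\bigl(u\,|\omega|^p\bigr)=0\quad\text{on}\ (0,T)\times\Omega.
\end{equation*}
For a nonnegative $\rho\in C_c^\infty(\Omega)$, testing this identity against $\eta(t)\rho(x)$ with $\eta\in C_c^\infty((0,T))$ and mollifying in time shows that $t\mapsto\int_\Omega\rho\,|\omega(t)|^p\,\diff x$ coincides with a Lipschitz function whose derivative equals $\int_\Omega|\omega|^p\,u\cdot\nabla\rho\,\diff x$. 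With $K:=\supp\rho$, Assumption~\ref{ass:assumption_on_linfty_bounds_vorticity} bounds $|\omega|^p$ in $L^\infty(0,T;L^1(K))$, while Lemma~\ref{cor:bounds_on_velocity_by_sobolev_embed} (using $p>2$) bounds $u$ in $L^\infty(0,T;L^\infty(K))$, so this derivative is essentially bounded in time.

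To move from smooth weights $\rho$ to the Hölder weight $|\chi|^p$ appearing in $\|\chi\omega(t)\|_{L^p}^p=\int_\Omega|\chi|^p|\omega(t)|^p\,\diff x$, I would approximate $|\chi|^p$ uniformly by a sequence $\rho_k\in C_c^\infty(\Omega)$ supported in a common compact $K\Subset\Omega$ containing $\supp\chi$. The uniform $L^\infty(0,T;L^1(K))$-bound on $|\omega|^p$ yields
\begin{equation*}
    \sup_{t\in[0,T]}\left|\int_\Omega(\rho_k-|\chi|^p)\,|\omega(t)|^p\,\diff x\right|\le\|\rho_k-|\chi|^p\|_{L^\infty}\,\sup_{t\in[0,T]}\|\omega(t)\|_{L^p(K)}^p\longrightarrow 0,
\end{equation*}
so $t\mapsto\|\chi\omega(t)\|_{L^p}^p$ is a uniform limit of continuous functions, hence continuous. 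Combined with weak continuity and uniform convexity, this delivers $\chi\omega\in C([0,T];L^p(\Omega))$.

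The main obstacle is the reconciliation of the Lipschitz representative of $t\mapsto\int\rho|\omega(t)|^p\,\diff x$ produced by the PDE with the pointwise values dictated by the $L_w^p$-continuous representative of $\omega$ at every single $t$, not just almost every $t$. This matching is precisely the DiPerna--Lions continuity-in-time statement for renormalized transport solutions already implicit in the renormalization clause of Lemma~\ref{L1}; alternatively, it can be deduced by combining weak-$L^p$ lower semicontinuity of $f\mapsto\int\rho|f|^p\,\diff x$ with the invariance of the class of renormalized solutions under time reversal (the substitution $u\mapsto-u$ preserves the required regularity and divergence-free constraint). Once this identification is secured, the remaining approximation and Radon--Riesz steps are routine.
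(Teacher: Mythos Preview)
Your proposal is correct and follows essentially the same route as the paper: derive continuity of $t\mapsto\|\chi\omega(t)\|_{L^p}$ from the renormalized identity \eqref{eq:renormalized_weak_formulation_vorticity} with $q=p$, then combine with the weak continuity from Lemma~\ref{L1} and invoke Radon--Riesz. The paper compresses the derivation into the single line \eqref{renormlimit} (using that $|\chi|^p\in C^1$ since $p>2$) and defers the representative-matching subtlety to \cite[Corollary~II.2]{diperna1989ordinary}; your version spells out the approximation of $|\chi|^p$ and the reconciliation issue more explicitly, but there is no substantive difference.
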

\begin{proof}
The argument runs very similarly to~\cite[Corollary II.2]{diperna1989ordinary}. Indeed, ~\eqref{eq:renormalized_weak_formulation_vorticity} entails
\begin{equation}\label{renormlimit}
        \frac{\diff}{\diff t}\int_\Omega|\chi\omega(t)|^p\diff x=\int_\Omega |\omega(t)|^p u\cdot\nabla|\chi|^p\diff x,
\end{equation}
and the right hand side is $L^\infty(0,T)$. This shows continuity of $t\mapsto \|\chi\omega(t)\|_{L^p(\Omega)}$. By Lemma~\ref{L1}, $\chi\omega\in C([0,T];L^p_w(\Omega))$. But since weak convergence and convergence of the norms jointly imply strong convergence, we conclude. 


\end{proof}

\section{Strong convergence of vorticity}

In this section, we introduce the weak* vorticity defect as a key tool. In the terminology of~\cite[Section 11.1]{MajdaBertozzi}, it is the weak* defect measure associated with the sequence $\{|\omega^\nu|^q\}$, although we choose $q$ small enough so that the defect measure becomes in fact a function in $L^\infty(0,T;L_{loc}^{\frac{p}q}(\Omega))$. It turns out the defect satisfies a transport inequality, and in order to put this to use, we need to ensure the defect is a renormalised solution of said inequality, which necessitates the stated integrability and thus the requirement $q<p$.

\begin{lem}\label{L2}
   Let $p > 2$, and $1\leq q < p$. Consider the sequence $\{\omega^\nu\}_{\nu > 0}$ of solutions to \eqref{sys:navier_stokes_vorticity_formulation} satisfying Assumptions~\ref{ass:data}and~ \ref{ass:assumption_on_linfty_bounds_vorticity}. Then, there exists a function
   $$
        m\in L^\infty(0, T; L_{loc}^{\frac{p}q}(\Omega))
   $$
   such that, for each $\tilde\Omega$ compactly contained in $\Omega$,
   $$
        |\omega^\nu|^q - |\omega|^q \stackrel{*}{\rightharpoonup} m \quad\text{weakly* in $L^\infty(0, T; L^{\frac{p}{q}}(\tilde\Omega))$}
   $$
   and
   \begin{align}\label{1}
       \partial_t m + u\cdot\nabla m \leq 0
   \end{align}
   in the sense of distributions, that is:
   \begin{equation*}
       \int_0^T\int_\Omega (\partial_t\psi+u\cdot \nabla \psi)m\diff x\diff t\geq 0
   \end{equation*}
   for all nonnegative $\psi\in C_c^\infty([0,T)\times\Omega)$.
\end{lem}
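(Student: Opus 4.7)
My strategy is to extract $m$ as a weak-$\ast$ limit defect, pass to the limit $\nu \to 0$ in the subsolution inequality \eqref{4} of Lemma \ref{lem:needed_bounds_for_vorticity_p}, and finally subtract the renormalized equation \eqref{eq:renormalized_weak_formulation_vorticity} of Lemma \ref{L1}. The only genuine subtlety is controlling the nonlinear advective term.

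First, I would extract $m$. Assumption \ref{ass:assumption_on_linfty_bounds_vorticity} bounds $\omega^\nu$ uniformly in $L^\infty(0,T;L^p(K))$ for every compact $K\subset\Omega$, hence $|\omega^\nu|^q$ is uniformly bounded in $L^\infty(0,T;L^{p/q}(K))$. Banach--Alaoglu combined with a diagonal extraction along a countable exhaustion of $\Omega$ yields a limit $\overline{|\omega|^q}\in L^\infty(0,T;L^{p/q}_{loc}(\Omega))$ with $|\omega^\nu|^q \wstar \overline{|\omega|^q}$ locally, and I set $m := \overline{|\omega|^q}-|\omega|^q$.

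Next, I would fix a nonnegative $\psi\in C_c^\infty([0,T)\times\Omega)$ and a compact $K\subset\Omega$ containing its spatial support, and examine each term in \eqref{4}. The time-derivative term converges by weak-$\ast$ convergence, since $\partial_t\psi\in L^1(0,T;L^{(p/q)'}(K))$. The initial-data term converges because $\omega_0^\nu\to\omega_0$ in $L^p_{loc}(\Omega)$ implies $|\omega_0^\nu|^q \to |\omega_0|^q$ in $L^{p/q}_{loc}(\Omega)$, via the elementary inequality $\big||a|^q-|b|^q\big|\leq C(|a|^{q-1}+|b|^{q-1})|a-b|$ and H\"older. The remainder $O(T\nu)$ vanishes. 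For the advective term I would split
\begin{equation*}
\int_0^T\!\!\int_\Omega |\omega^\nu|^q u^\nu\cdot\nabla\psi\diff x\diff t = \int_0^T\!\!\int_\Omega |\omega^\nu|^q u\cdot\nabla\psi\diff x\diff t + \int_0^T\!\!\int_\Omega |\omega^\nu|^q (u^\nu-u)\cdot\nabla\psi\diff x\diff t.
\end{equation*}
Because $p>2$, Lemma \ref{cor:bounds_on_velocity_by_sobolev_embed} gives $\chi u\in L^\infty(0,T;C(\overline{\Omega}))$ for a cutoff $\chi\equiv 1$ on $K$, so $u\cdot\nabla\psi\in L^1(0,T;L^{(p/q)'}(K))$ and the first summand converges by Step~1. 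The second summand is controlled via
\begin{equation*}
\left|\int_0^T\!\!\int_K |\omega^\nu|^q(u^\nu-u)\cdot\nabla\psi\diff x\diff t\right| \leq C\,\|\nabla\psi\|_\infty\,\|u^\nu-u\|_{L^\infty([0,T]\times K)}\,T\,|K|^{1-q/p}\,C_K^{q/p},
\end{equation*}
which vanishes thanks to the ``moreover'' part of Lemma \ref{lem:strong_conv_of_velocity} (again requiring $p>2$).

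Collecting these limits, \eqref{4} in the limit reads
\begin{equation*}
0 \leq \int_0^T\!\!\int_\Omega (m+|\omega|^q)(\partial_t\psi+u\cdot\nabla\psi)\diff x\diff t + \int_\Omega |\omega_0|^q\psi(0,\cdot)\diff x.
\end{equation*}
Subtracting the renormalized identity \eqref{eq:renormalized_weak_formulation_vorticity} from Lemma \ref{L1} (valid for this $q$ and $\psi$) leaves exactly $0\leq \int_0^T\!\int_\Omega m(\partial_t\psi+u\cdot\nabla\psi)\diff x\diff t$, which is the distributional formulation of \eqref{1}. The hardest step is the passage to the limit in the nonlinear term $|\omega^\nu|^q u^\nu\cdot\nabla\psi$, which is made possible precisely by the strong (indeed uniform) convergence of $u^\nu$ available when $p>2$.
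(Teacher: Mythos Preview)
Your proposal is correct and takes essentially the same approach as the paper: extract the defect $m$ by weak-$\ast$ compactness, combine inequality \eqref{4} with the renormalized identity \eqref{eq:renormalized_weak_formulation_vorticity}, and handle the cross term $|\omega^\nu|^q(u^\nu-u)\cdot\nabla\psi$ via the strong (uniform) convergence of the velocity from Lemma~\ref{lem:strong_conv_of_velocity}. The only cosmetic difference is that the paper subtracts the two identities \emph{before} passing to the limit, whereas you pass to the limit in \eqref{4} first and then subtract; these are equivalent.
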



\begin{proof}We combine the estimates in \eqref{4} and \eqref{eq:renormalized_weak_formulation_vorticity} to the effect that
\begin{align*}
0& \le \int_0^T\int_\Omega \left(|\omega^{\nu}|^q - |\omega|^q\right) \left(\partial_t \psi +u\cdot \grad\psi\right)\, \diff x\diff t\\
&\quad  +\int_\Omega \left(|\omega_0^{\nu}|^q-|\omega_0|^q\right)\psi\, \diff x  + \int_0^T\int_\Omega |\omega^{\nu}|^q\left(u^{\nu}-u\right)\cdot\grad \psi\, \diff x \diff t +O(\nu T)
\end{align*}
for any nonnegative $\psi \in C_c^{\infty}([0,T)\times \Omega)$. In view of the general Assumption \ref{ass:assumption_on_linfty_bounds_vorticity}, the difference $|\omega^{\nu}|^q-|\omega|^q$ is uniformly bounded in $L^{\infty}L^{\frac{p}q}$ on the support of $\psi$, and has thus a weakly* converging subsequence in that space. By arbitrary choice of $\psi$ and the usual diagonal argument, we can take the limit $m$ to be in $L^\infty(0,T;L^{\frac pq}_{loc}(\Omega))$. This settles the convergence of the first term on the right-hand side. 

The second term vanishes in the zero-viscosity limit due to Assumption~\ref{ass:data} on the initial data. The third one converges to zero by virtue of the uniform result in \ref{conv:strong_conv_of_velocity} and the overall Assumption \ref{ass:assumption_on_linfty_bounds_vorticity}. In the limit $\nu\to 0$, we thus arrive at
\[
0\le \int_0^T\int_\Omega m\left(\partial_t \psi + u\cdot \grad \psi\right)\, \diff x \diff t,
\]
as desired.
\end{proof}

\begin{rem}
    Note that \eqref{1} in fact tells us that the weak material derivative of $m$ is nonpositive. Therefore, moving into Lagrangian coordinates, one can expect, since $m$ starts from $0$, that it has to be nonpositive. Nevertheless, to prove this observation rigorously we must carefully craft a test function for our weak formulation. This is done in the lemma below.
\end{rem}

In what follows, we assume that the exponent $q$ in Lemma \ref{L2} satisfies $q\leq p-1$, so that $\frac pq \geq \frac{p}{p-1}$, the latter fraction being the dual exponent of $p$.

\begin{lem}\label{L3}
Suppose that $p> 2$ and $1\leq q\leq p-1$. Let $m\in L^{\infty}(0,T;L_{loc}^{\frac{p}{q}}(\Omega))$ be a distributional solution of \eqref{1} such that $m(0,\cdot)=0$. Then for any compact set $K$ there exists a time $T_K>0$ such that  $m(t,\cdot)\le 0$ in $K$ for almost every $t\in [0,T_K]$. 
\end{lem}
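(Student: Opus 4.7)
The plan is to perform a DiPerna--Lions renormalization for the \emph{inequality}~\eqref{1}, and then exploit the local $L^\infty$ bound on $u$ from Lemma~\ref{cor:bounds_on_velocity_by_sobolev_embed} (available because $p>2$) via a finite-speed-of-propagation argument.

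First, I would write~\eqref{1} as $\partial_t m + u\cdot\nabla m = -\mu$ for some nonnegative Radon measure $\mu$ on $(0,T)\times\Omega$, and mollify spatially to obtain
\begin{equation*}
   \partial_t m_\epsilon + u\cdot\nabla m_\epsilon = r_\epsilon - \mu*\eta_\epsilon,
\end{equation*}
where $m_\epsilon := m*\eta_\epsilon$, $\mu*\eta_\epsilon\ge 0$, and $r_\epsilon := u\cdot\nabla m_\epsilon - (u\cdot\nabla m)*\eta_\epsilon$. The hypothesis $q\le p-1$ places $m$ in $L^\infty_tL^{p'}_{loc,x}$ and, combined with $\chi u\in L^\infty_tW^{1,p}$ from Lemma~\ref{cor:bounds_on_velocity_by_sobolev_embed}, the commutator lemma of~\cite{diperna1989ordinary} yields $r_\epsilon\to 0$ in $L^1_{loc}((0,T)\times\Omega)$. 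For a $C^1$ bounded nondecreasing $\beta$ vanishing on $(-\infty,0]$, multiplying by $\beta'(m_\epsilon)\ge 0$ and using $\DIV u=0$ produces $\partial_t\beta(m_\epsilon)+\DIV(u\beta(m_\epsilon))\le \beta'(m_\epsilon)r_\epsilon$; testing against any nonnegative $\psi\in C_c^\infty([0,T)\times\Omega)$ and letting $\epsilon\to 0$ (bounded $\beta'$ kills the commutator term, and $\beta(m(0,\cdot))=0$ since $m(0,\cdot)\equiv 0$) delivers the renormalized subsolution inequality
\begin{equation*}
   \int_0^T\!\!\int_\Omega (\partial_t\psi + u\cdot\nabla\psi)\beta(m)\,\diff x\diff t \ge 0.
\end{equation*}

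Next, to derive the one-sided bound I would test against a cone-shaped cutoff: fix $x_0\in K$ and $R>0$ with $\overline{B_{2R}(x_0)}\subset\Omega$, set $c:=\|u\|_{L^\infty((0,T)\times\overline{B_{2R}(x_0)})}$ (finite by Lemma~\ref{cor:bounds_on_velocity_by_sobolev_embed}), and pick $\phi\in C^\infty(\R)$ nonincreasing with $\phi\equiv 1$ on $(-\infty,R]$ and $\phi\equiv 0$ on $[R+\delta,\infty)$, together with $\theta\in C_c^\infty([0,T))$ strictly decreasing on $[0,T_0]$ for some $T_0<R/c$. With $\psi(t,x):=\theta(t)\phi(|x-x_0|+ct)\ge 0$,
\begin{equation*}
   \partial_t\psi + u\cdot\nabla\psi = \theta'(t)\phi + \theta(t)\phi'\bigl(c + u\cdot\tfrac{x-x_0}{|x-x_0|}\bigr),
\end{equation*}
and each summand is nonpositive on the support (for the second, $\phi'\le 0$ and $|u|\le c$). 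Hence the integrand $(\partial_t\psi+u\cdot\nabla\psi)\beta(m)$ is $\le 0$ while its integral is $\ge 0$, so it vanishes almost everywhere. Inside the cone $\{|x-x_0|+ct<R\}$, where $\phi\equiv 1$ and $\phi'=0$, the sum reduces to $\theta'(t)$, which is strictly negative on $(0,T_0)$, forcing $\beta(m(t,x))=0$ a.e.\ in the cone. A monotone sequence of bounded Lipschitz renormalizations $\beta_N\uparrow s_+$ and the monotone convergence theorem then give $m\le 0$ a.e.\ there. Covering the compact set $K$ by finitely many such balls with a uniform $c$ (possible because $u$ is bounded on a compact neighborhood of $K$) produces a common $T_K>0$.

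The main technical obstacle will be the DiPerna--Lions step adapted from an equation to an \emph{inequality}: signs must be tracked carefully, multiplication by $\beta'(m_\epsilon)\ge 0$ must preserve the one-sided character, and the commutator error must vanish in the limit. The restriction $q\le p-1$ enters precisely to place $m$ in $L^{p'}_{loc}$, where the standard commutator lemma applies; using \emph{bounded} Lipschitz $\beta$'s rather than power-type ones sidesteps delicate integrability issues that would otherwise arise.
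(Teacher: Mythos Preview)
Your approach is correct in spirit and constitutes a genuinely different route from the paper's. The paper does \emph{not} renormalize $m$; it works on the dual side instead. Given a nonnegative $\eta\in C_c^\infty((0,T_K)\times U)$, it solves the backward transport problem $\partial_t\phi+u\cdot\nabla\phi=-\eta$, $\phi(T_K,\cdot)=0$, explicitly via the DiPerna--Lions Lagrangian flow (so $\phi(t,X(t,x))=\int_t^{T_K}\eta(s,X(s,x))\,ds\ge 0$, compactly supported because the flow stays in a fixed compact set for $t\le T_K$), then mollifies $\phi$ in space and plugs $\phi^\epsilon$ directly into the inequality for $m$. The commutator lemma is applied to $\phi\in L^\infty$ rather than to $m$, and one obtains $\int m\,\eta\le 0$ for all such $\eta$, hence $m\le 0$. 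By contrast, you renormalize $m$ with a nondecreasing $\beta$ vanishing on $(-\infty,0]$ and then test with an explicit finite-speed cone. Your argument makes the domain-of-dependence mechanism completely transparent and avoids invoking the Lagrangian flow, but pays for this with the more delicate step of renormalizing a \emph{sub}solution: after only spatial mollification, $\partial_t m_\epsilon$ is merely a signed measure (function minus $\mu\ast\eta_\epsilon$), so the chain rule ``multiply by $\beta'(m_\epsilon)$'' is not immediate and needs an additional time-mollification or a BV chain-rule argument; also, your $\theta$ cannot literally be both in $C_c^\infty([0,T))$ and strictly decreasing on a closed interval containing its support, so one should rather take $\theta$ nonincreasing and vary it to cover $(0,T_K)$. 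The paper's dual argument keeps the commutator estimate in the comfortable $L^\infty$ setting for $\phi$ and sidesteps these chain-rule issues, at the cost of importing the existence and invertibility of the regular Lagrangian flow.
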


\begin{proof}
We choose  arbitrarily a compact set $K$ and an open set $U$ such that $K\subset \subset U\subset\subset \Omega$.

As $u\in L^\infty(0,T;W_{loc}^{1,p}(\Omega))$, by DiPerna-Lions theory~\cite[Theorem III.2]{diperna1989ordinary} the transport by $u$ can be represented by a Lagrangian flow $X(t,x)$. More precisely, the flow is given by 
\begin{align*}
    \partial_tX(t,x)=u(t,X(t,x)),\quad \quad X(0,x)=x.
\end{align*} 
As $u$ is continuous, there exists a number $T_K>0$, depending also on $U$, and another open set $U\subset\subset\tilde U\subset\subset\Omega$ such that $X^{-1}(t,U)\subset\tilde U$ for all $t \in [0, T_K]$.


Let now $\eta\in C^\infty_c((0,T_K)\times U)$ be nonnegative and consider the final-time problem for the forced transport equation 
\begin{equation}\label{2}
\partial_t \phi + u\cdot \grad \phi = - \eta,\quad \phi(T_K,\cdot) = 0.
\end{equation}

Using the method of characteristics, the solution to this problem is explicitly characterized by 
\begin{equation*}
    \phi(t,X(t,x))=\int_t^{T_K}\eta(s,X(s,x))\diff s,
\end{equation*}
whence $\phi$ is seen to be supported on $\tilde U$ for all times in $[0,T_K]$ (because $\supp (\eta)\subset U$ and $U\supset X^{-1}(t,U)$), and $\phi\geq 0$ on that same time interval.


At this point we are tempted to use $\phi$, extended trivially by zero past $T_K$, as a test function in~\eqref{1}. Alas, the transport equation propagates only low regularity (more precisely: a logarithm of a derivative \cite{CrippaDeLellis08,Jabin16,Leger18,AlbertiCrippaMazzucato19,
crippa2021growth,BrueNguyen21,MeyerSeis23}),  so that $\phi$ is not a valid test function in \eqref{1} without further regularization. Only the space regularity is an issue here, so we approximate $\phi$ by space convolution as in Lemma \ref{L1} to obtain
\begin{equation}
\label{3}
\partial_t\phi^{\eps} + u\cdot \grad\phi^{\eps} = -\eta^{\eps}+r^{\eps},\quad r^{\eps}= (u\cdot \grad \phi^{\eps}) - (u\cdot \grad\phi)^{\eps},
\end{equation}
and the commutator $r^{\eps}$      vanishes in $L^1(0,T_K;L^p_{loc}(\Omega))$ because $\phi\in L^{\infty}((0,T_K)\times\Omega)$, cf.~Lemma \ref{commutator_lemma} in the appendix. 
Using $\phi^{\eps}$  in \eqref{1}, we  now obtain
\[
\int_0^{T_K}\int_\Omega m\left(\partial_t \phi^{\eps} + u\cdot \grad\phi^{\eps}\right)\, \diff x \diff t \ge 0, 
\]
and then, by applying \eqref{3},
\[
\int_0^{T_K} \int_\Omega m \eta^{\eps}\, \diff x\diff t \le \int_0^{T_K}\int_\Omega m r^{\eps}\, \diff x \diff t.
\]
Because $m\in L^\infty\left(0,T;L^{\frac{p}{p-1}}_{loc}(\Omega)\right)$ and $r^{\eps}$ converges to zero in $L^1L^p_{loc}$, the right hand side vanishes in the limit. 
We deduce
\[
\int_0^{T_K}\int_\Omega m\eta\diff x\diff t\leq 0,
\]
thus proving the assertion of the lemma as $\eta\geq0$ was taken arbitrarily from $C^\infty_c((0,T_K)\times U)$.

\end{proof}

Finally, we are in the position to prove the main result, Theorem~\ref{mainthm}, which claims $\omega^\nu\to\omega$ in $C_{loc}([0,T];L^q_{loc}(\Omega))$ for any $1\leq q < p$.

\begin{proof}[Proof of Theorem~\ref{mainthm}]
We fix an arbitrary compact subset $K$ of $\Omega$. 

{\em Step 1.} By lower semi-continuity and Lemma \ref{L3} with $q=p-1$, it is clear that
\[
0 \le \liminf_{\nu\to 0} \int_0^T\int_\Omega \left(|\omega^{\nu}|^{{p-1}}-|\omega|^{{p-1}}\right)\psi \, \diff x \diff t = \int_0^T\int_\Omega m \psi\, \diff x \diff t \le 0,
\]
for any non-negative $\psi \in C^{\infty}_c((0,T_K)\times K)$, and thus, $m=0$ in $(0,T_K)\times K$. Choosing $T_K$ as the initial time, the argument can be repeated (recall that $T_K$ from Lemma~\ref{L3} depended only on $K$ and the supremum norm of $u$!) and we obtain that $m=0$ in $(0,T)\times K$.
Since weak convergence and convergence of norms together imply strong convergence, it follows that $\omega^{\nu}$ is strongly convergent to $\omega$ in $L^q((0,T)\times K)$ for any $q\le {p-1}$ and then by interpolation also in $[1,p)$. 

{\em Step 2.} It remains to show that this convergence is in fact uniform in time. Let again $\chi\in C_c^\infty(\Omega)$ with support in $K$. First, from~\eqref{renormlimit} we see that $\frac{\diff }{\diff t}\int_{\Omega}|\chi\omega(t)|^q\diff x\in L^\infty(0,T)$ in the sense of distributions. Similarly, \eqref{renormnu} reveals $\frac{\diff }{\diff t}\int_{\Omega}|\chi\omega^\nu|^q\diff x\leq C$, where the constant $C$ is independent of the viscosity $\nu$. Recalling Corollary~\ref{ctycor} and the local $L^q$ convergence of the initial vorticities, we find that the assumptions of Lemma~\ref{uniformlem} are satisfied for
\begin{equation*}
    f_\nu(t):=\int_\Omega |\chi\omega^\nu(t)|^q\diff x,\quad\quad f(t):=\int_\Omega |\chi\omega(t)|^q\diff x
\end{equation*}
(the convergence $f_{\nu}\to f$ in $L^1(0,T)$, is precisely what has been established in Step 1). We conclude $f_n\to f$ uniformly on $[0,T']$ for any $T'<T$, and the result of this proof step is 
\begin{equation}\label{convergenceofnorms}
    \int_\Omega |\chi\omega^\nu(t)|^q\diff x\to\int_\Omega |\chi\omega(t)|^q\diff x\quad \text{uniformly in $[0,T']$, for any $1\leq q < p$.}
\end{equation}

{\em Step 3.} For $\chi$ as above, we want to prove $\chi\omega^\nu\to\chi\omega$ in $C([0,T'];L^q(\Omega))$. Assume this were not the case, so there existed a $\delta>0$ and, for each $\nu>0$, a time $t^\nu\in[0,T']$ such that
\begin{equation}\label{contradiction}
    \|\chi\omega^\nu(t^\nu)-\chi\omega(t^\nu)\|_{L^q}\geq \delta.
\end{equation}
Up to a subsequence, $t^\nu\to t^*\in[0,T]$. Then, for any $\psi\in L^{q'}(\Omega)$,
\begin{equation*}
    \left|\int_\Omega \psi\chi(\omega^\nu(t^\nu)-\omega(t^*))\diff x\right|\leq \left|\int_\Omega \psi\chi(\omega^\nu(t^\nu)-\omega(t^\nu))\diff x\right|+\|\psi\|_{L^{q'}}\|\chi(\omega(t^\nu)-\omega(t^*))\|_{L^q}\to0 
\end{equation*}
as $\nu\to0$, because the first term vanishes due to $\chi\omega^\nu\to \chi\omega$ in $C([0,T'];L^q_w(\Omega))$ (Lemma~\ref{L1}), and so does the second term by Corollary~\ref{ctycor}. This means $\chi\omega^\nu(t^\nu)\rightharpoonup\chi\omega(t^*)$ weakly in $L^q(\Omega)$.
Therefore, by weak lower semicontinuity,
\begin{align*}
    \|\chi\omega(t^*)\|_{L^q}&\leq \liminf_{\nu\to0}\|\chi\omega^\nu(t^\nu)\|_{L^q}\leq \limsup_{\nu\to0}\|\chi\omega^\nu(t^\nu)\|_{L^q} \\
    & \leq \limsup_{\nu\to0}\left[\left(\|\chi\omega^\nu(t^\nu)\|_{L^q}-\|\chi\omega(t^\nu)\|_{L^q}\right)+\|\chi\omega(t^\nu)\|_{L^q}\right]\\
    & \leq \limsup_{\nu\to0}\left[\sup_{t\in[0,T']}\left(\|\chi\omega^\nu(t)\|_{L^q}-\|\chi\omega(t)\|_{L^q}\right)+\|\chi\omega(t^\nu)\|_{L^q}\right]\\
    &=\|\chi\omega(t^*)\|_{L^q},
\end{align*}
where for the final estimate we used~\eqref{convergenceofnorms} and, once more, Corollary~\ref{ctycor}. In total, we arrive at $\|\chi\omega(t^*)-\chi\omega^\nu(t^\nu)\|_{L^q}\to0$. Keeping in mind $\chi\omega(t^\nu)\to\chi\omega(t^*)$ in $L^q(\Omega)$, this gives us the desired contradiction with~\eqref{contradiction}.

\medskip

\emph{Step 4}. We finally establish the vanishing enstrophy dissipation by testing the \eqref{sys:navier_stokes_vorticity_formulation} equations with $\chi \omega^\nu$ and integrating by parts,
\begin{align*}
\MoveEqLeft \frac12 \int_{\Omega} \chi (\omega^{\nu}(T',\cdot))^2\, \diff x - \frac12 \int_{\Omega} \chi (\omega^{\nu}_0)^2\, dx +\nu \int_0^{T'} \int_{\Omega} \chi |\nabla \omega^{\nu}|^2\, \diff x \diff t \\
& = \frac{\nu}2 \int_0^{T'}\int_{\Omega} \Delta \chi (\omega^{\nu})^2\, \diff x \diff t +\frac12\int_0^{T'}\int_{\Omega} \nabla\chi \cdot u^{\nu}(\omega^{\nu})^2\, \diff x \diff t.
\end{align*}
Passing to the limit $\nu\to 0$ and using the established strong convergence results, which actually entail that the velocity fields are uniformly converging, we deduce that
\begin{align*}
\MoveEqLeft \frac12 \int_{\Omega} \chi (\omega(T',\cdot))^2\, \diff x - \frac12 \int_{\Omega} \chi \omega_0^2\, \diff x +\lim_{\nu\to 0} \nu \int_0^{T'} \int_{\Omega} \chi |\nabla \omega^{\nu}|^2\, \diff x \diff t \\
& = \frac12\int_0^{T'}\int_{\Omega} \nabla\chi \cdot u\, \omega^2\, \diff x \diff t.
\end{align*}
The last term on the left-hand side must be zero because $\omega$ is a renormalized solution.
\end{proof}

\appendix
\section{Renormalization in Lemma \ref{L1}}\label{proof_renormalization}

 First, we need to extend our equation to negative times in order to keep the initial conditions. Let $\overline{u}$ be the extension of $u$ by $0$ and
    \begin{align*}
    \overline{\omega}(t,x) = \begin{cases}
    \omega(t, x) &\text{ when }t\in (0, T],\\
    \omega_0(x) &\text{ when }t \leq 0.
    \end{cases}
    \end{align*}
    Fix an arbitrary $\varphi\in C^\infty_c((-T, T)\times \Omega)$. Then, by \eqref{weak_formulation_vorticity},
    \begin{equation}\label{eq:weak_formulation_vorticity_negative_times}
        \begin{split}
            \int_{-T}^T\int_{\Omega}\overline{\omega}\,\p_t\varphi + &(\overline{u}\cdot\nabla\varphi)\,\overline{\omega}\diff x\diff t = \\
            &= \int_{-T}^0\int_{\Omega}\omega_0(x)\,\p_t\varphi(t,x)\diff x\diff t + \int_0^T\int_{\Omega}\omega\,\p_t\varphi\diff x\diff t + \int_0^T\int_{\Omega}(u\cdot\nabla\varphi)\,\omega\diff x\diff t\\
            &= \int_{\Omega}\omega_0\varphi(0, x)\diff x + \int_0^T\int_{\Omega}\omega\,\p_t\varphi\diff x\diff t + \int_0^T\int_{\Omega}(u\cdot\nabla\varphi)\,\omega\diff x\diff t  = 0.
        \end{split}
    \end{equation}
    Choosing in \eqref{eq:weak_formulation_vorticity_negative_times}  a space-mollified test function $\varphi^{\eps}=\eta_{\eps}\ast \varphi$ (which we can do for small mollification parameters $\eps$ because $\varphi$ is compactly supported in $\Omega$), we see that
    \begin{align*}
        \int_{-T}^T\int_{\Omega}\overline{\omega}^\varepsilon\,\p_t\varphi\diff x\diff t =  \int_{-T}^T\int_{\Omega}\DIV(\overline{u}\,\overline{\omega})^\varepsilon\,\varphi\diff x\diff t,
    \end{align*}
    which together with Lemma~\ref{cor:bounds_on_velocity_by_sobolev_embed} implies
    $$
    \overline{\omega}^\varepsilon \in W^{1,1}((-T, T)\times \Omega')
    $$
    for any subdomain $\overline{\Omega'}\subset\Omega$. In particular
    \begin{align}\label{eq:strong_formulation_for_vorticity_molified}
        \p_t\overline{\omega}^\varepsilon + \overline{u}\cdot\nabla\overline{\omega}^\varepsilon = r_{\varepsilon},
    \end{align}
    where
    $$
    r_{\varepsilon} = \overline{u}\cdot\nabla\overline{\omega}^\varepsilon-\DIV(\overline{u}\,\overline{\omega})^\varepsilon  .
    $$
    From the Commutator Lemma \ref{commutator_lemma} and the known regularity  $\overline{u}\in L^\infty((-T, T); W^{1, p}_{loc}(\Omega))$ and  $\overline{\omega}\in L^\infty((-T, T); L^p_{loc}(\Omega))$ (recall $p>2$) we can deduce
    \begin{align}
        r_\varepsilon \rightarrow 0 \text{ as }\varepsilon\to 0^+,\text{ strongly in }L^1\left((-T, T); L^{\frac{p}{2}}_{loc}(\Omega)\right).
    \end{align}
    Take a function $\beta\in C^1(\R)$ for which $\beta'$ is bounded in $\R$, and multiply \eqref{eq:strong_formulation_for_vorticity_molified} by $\beta'(\overline{\omega^\varepsilon})$. With the use of the chain rule for Sobolev mappings \cite[Theorem 4.4]{evans2015measure} we get
    \begin{align}\label{eq:renormalized_vorticity_molified}
        \partial_t\beta(\overline{\omega}^\varepsilon) + \overline{u}\cdot\nabla\beta(\overline{\omega}^\varepsilon) = r_{\varepsilon}\beta'(\overline{\omega}^\varepsilon).
    \end{align}
    Fix an arbitrary $\psi\in C^\infty_c([0, T) \times \Omega)$ and extend it smoothly to negative times, in a way that $\overline{\psi}\in C^\infty_c((-T, T)\times\Omega)$, where $\overline{\psi}$ denotes the extension. Multiply \eqref{eq:renormalized_vorticity_molified} by $\overline{\psi}$ and integrate by parts to obtain
    \begin{align*}
        \int_{0}^T\int_{\Omega}\beta(\omega^\varepsilon)\,\p_t\psi + (u\cdot\nabla\psi)\,\beta(\omega^\varepsilon)\diff x\diff t = -\int_{-T}^T\int_{\Omega}r_\varepsilon\,\beta'(\overline{\omega}^\varepsilon)\,\overline{\psi}\diff x\diff t  -\int_{\Omega}\beta(\omega_0^\varepsilon)\psi(0, x)\diff x.
    \end{align*}
    Converging with $\varepsilon\to 0^+$ we get
    \begin{align}\label{eq:renormalized_weak_formulation_vorticity_beta_form}
        \int_{0}^T\int_{\Omega}\beta(\omega)\,\p_t\psi + (u\cdot\nabla\psi)\,\beta(\omega)\diff x\diff t = -\int_{\Omega}\beta(\omega_0)\psi(0, x)\diff x.
    \end{align}
    By approximation, we can pick $\beta(t) = \mathrm{min}(|t|^q, M)$ in \eqref{eq:renormalized_weak_formulation_vorticity_beta_form} to see
    \begin{align}\label{eq:renormalized_weak_formulation_vorticity_minimum_form}
        \int_{0}^T\int_{\Omega}\mathrm{min}(|\omega|^q, M)\,\p_t\psi + (u\cdot\nabla\psi)\,\mathrm{min}(|\omega|^q, M)\diff x\diff t = -\int_{\Omega}\mathrm{min}(|\omega_0|^q, M)\psi(0, x)\diff x,
    \end{align}
    and with the use of Lebesgue's dominated convergence theorem we can converge with $M\to +\infty$ in \eqref{eq:renormalized_weak_formulation_vorticity_minimum_form} to \eqref{eq:renormalized_weak_formulation_vorticity}.

\section{Auxiliary propositions}

\begin{prop}\label{aubin_lions_lemma}\textbf{\textup{(Aubin-Lions lemma~\cite{CJL14})}}\\
    Suppose $X_0, X, X_1$ are Banach spaces such that $X_0$ embeds compactly into $X$ and $X$ embeds continuously into $X_1$. Then 
    $$
    W := \left\{u\in L^p((0, T); X_0),\, \frac{d}{dt}u\in L^q((0, T); X_1)\right\}
    $$
    embeds compactly into
    \begin{itemize}
        \item $L^p((0, T); X)$, for $p < +\infty$ and $1\leq q\leq +\infty$,
        \item $C([0, T]; X)$, for $p = +\infty$ and $1 < q \leq +\infty$.
    \end{itemize}
\end{prop}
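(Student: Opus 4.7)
The plan is to proceed along the classical Lions--Aubin route, centered on an interpolation (Ehrling) lemma. First I would establish the auxiliary inequality: under the hypotheses that $X_0$ embeds compactly into $X$ and $X$ embeds continuously into $X_1$, for every $\eta>0$ there exists $C_\eta>0$ such that
\[
\|v\|_X \leq \eta \|v\|_{X_0} + C_\eta \|v\|_{X_1}
\]
for all $v\in X_0$. The proof is by contradiction: normalising a violating sequence so that $\|v_k\|_{X_0}=1$, the compact embedding yields a subsequence converging in $X$, whose $X_1$-limit must vanish; by continuity of $X\hookrightarrow X_1$, this contradicts the lower bound on $\|v_k\|_X$ imposed by the (failed) inequality.

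Given a bounded sequence $\{u_n\}\subset W$, by weak (or weak-$*$) compactness I pass to a subsequence with $u_n\rightharpoonup u$ in $L^p(0,T;X_0)$ and $\partial_t u_n\rightharpoonup \partial_t u$ in $L^q(0,T;X_1)$. Replacing $u_n$ by $u_n-u$, I may assume the weak limit is zero. Applying Ehrling's inequality pointwise in $t$ and integrating gives
\[
\|u_n\|_{L^p(0,T;X)} \leq \eta \|u_n\|_{L^p(0,T;X_0)} + C_\eta \|u_n\|_{L^p(0,T;X_1)}
\]
(with the supremum replacing the $L^p$-norm when $p=\infty$). Since $\eta$ times a uniform $L^p(X_0)$-bound can be made arbitrarily small, the problem reduces to proving $u_n \to 0$ strongly in $L^p(0,T;X_1)$, respectively in $C([0,T];X_1)$.

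For this reduced problem I would exploit the time-derivative bound to extract equicontinuity in the weaker space $X_1$. When $q>1$, the direct estimate $\|u_n(t)-u_n(s)\|_{X_1}\leq |t-s|^{1-1/q}\|\partial_t u_n\|_{L^q(X_1)}$ provides uniform H\"older continuity; combined with pointwise boundedness in $X_1$ (obtained from the absolute continuity in $X_1$ together with the $L^p(X_0)$-bound), the Arzel\`a--Ascoli theorem valued in the Banach space $X_1$ yields relative compactness in $C([0,T];X_1)$, settling the second case and \emph{a fortiori} the first when $q>1$. Weak convergence then pins the strong limit at zero, and letting $\eta\to 0$ in the Ehrling estimate concludes.

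The main obstacle is the endpoint $q=1$ in the first case: the H\"older bound degenerates, so equicontinuity must be recovered through equiintegrability of $\|\partial_t u_n\|_{X_1}$, yielding only an $L^1$-modulus of continuity $\omega(h)\to 0$. One then invokes the Fr\'echet--Kolmogorov compactness criterion directly in $L^p(0,T;X_1)$ (Simon's compactness theorem), combining the time-translation estimate with the bound in $L^p(X_0)$ transferred through the compact embedding to obtain the desired strong convergence, as carried out in~\cite{CJL14}.
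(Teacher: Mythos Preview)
The paper does not prove this proposition; it is merely stated in the appendix with a citation to~\cite{CJL14} and used as a black box. There is therefore no proof in the paper to compare against.

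Your sketch follows the classical Lions--Aubin--Simon route and captures the main ideas, but two steps need tightening. First, extracting a weak limit in $L^p(0,T;X_0)$ presupposes reflexivity of $X_0$ (and $1<p<\infty$), which is not assumed; this is harmless, since one can argue relative compactness directly without subtracting a limit, but it should be said. Second, and more substantively, for Arzel\`a--Ascoli in $C([0,T];X_1)$ you need pointwise \emph{relative compactness} in $X_1$, not merely pointwise boundedness---the latter is insufficient in infinite dimensions. For $p=\infty$ this is easy: the $L^\infty(X_0)$-bound gives $\{u_n(t)\}$ bounded in $X_0$ for a.e.\ $t$, hence precompact in $X_1$ via the compact embedding, and equicontinuity extends this to every $t$. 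For $p<\infty$, however, no pointwise $X_0$-bound is available, and your ``a fortiori'' claim does not follow as stated. The standard fix is to note that time-averages $\frac{1}{h}\int_t^{t+h}u_n(s)\,ds$ are bounded in $X_0$ (hence precompact in $X_1$) and approximate $u_n(t)$ in $X_1$ uniformly in $n$ by the derivative bound; alternatively, one simply runs the Simon/Fr\'echet--Kolmogorov argument you reserve for $q=1$ in this case as well. With these adjustments the outline is sound.
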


\begin{prop}\label{commutator_lemma}\textbf{\textup{(Commutator lemma~\cite[Lemma II.1]{diperna1989ordinary})}}\\
    Let $B\in L^1((0, T); W^{1,\alpha}_{loc}(\R^d))$ and $w\in L^\infty((0, T); L^p_{loc}(\R^d))$, where $1\leq p\leq +\infty$ and $\alpha \geq \frac{p}{p-1}$. Then
    $$
    (B\cdot\nabla_x w)^\varepsilon - B\cdot\nabla_x w^\varepsilon \rightarrow 0 \text{ as }\varepsilon\to 0^+\text{, strongly in }L^1((0, T); L^\beta_{loc}(\R^d)),
    $$
    for $\frac{1}{\beta} = \frac{1}{\alpha} + \frac{1}{p}$.
\end{prop}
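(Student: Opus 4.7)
The plan is to follow the classical DiPerna--Lions commutator argument. Using the identity $B\cdot\nabla w = \nabla\cdot(Bw) - w\,\nabla\cdot B$ (which makes sense in the sense of distributions at our level of regularity, since $Bw\in L^1_t L^\beta_{loc}$ and $w\,\nabla\cdot B\in L^1_t L^\beta_{loc}$ by H\"older), and noting that $(\nabla\cdot(Bw))^\varepsilon = \nabla\eta_\varepsilon * (Bw)$ while $B\cdot\nabla w^\varepsilon = B\cdot(\nabla\eta_\varepsilon * w)$, the commutator admits the explicit representation
\[
r_\varepsilon(t,x) = \int_{\R^d}\nabla\eta_\varepsilon(x-y)\cdot\bigl[B(t,y)-B(t,x)\bigr]w(t,y)\,dy \;-\;\bigl(w\,\nabla\cdot B\bigr)^\varepsilon(t,x).
\]
The second piece is harmless: standard properties of mollification combined with $w\,\nabla\cdot B\in L^1_tL^\beta_{loc}$ give $(w\,\nabla\cdot B)^\varepsilon \to w\,\nabla\cdot B$ strongly in $L^1_tL^\beta_{loc}$. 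The work therefore concentrates on the integral term $I_\varepsilon(t,x)$.

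Next, I would rescale via $z=(y-x)/\varepsilon$ to bring $I_\varepsilon$ to the form
\[
I_\varepsilon(t,x) = \int_{B_1}\nabla\eta(-z)\cdot\frac{B(t,x+\varepsilon z)-B(t,x)}{\varepsilon}\,w(t,x+\varepsilon z)\,dz.
\]
For smooth $B$, Taylor expansion yields pointwise convergence of the difference quotient to $\nabla B(t,x)\,z$, so that
\[
I_\varepsilon(t,x) \longrightarrow w(t,x)\sum_{i,j}\partial_j B_i(t,x)\int\partial_i\eta(-z)\,z_j\,dz = w(t,x)\,\nabla\cdot B(t,x),
\]
where the identity $\int \partial_i\eta(-z)\,z_j\,dz = \delta_{ij}$ follows from an integration by parts and $\int\eta=1$. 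Combined with the easy term, this gives $r_\varepsilon\to 0$ pointwise, and dominated convergence upgrades this to the required $L^1_tL^\beta_{loc}$ convergence in the smooth case.

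To reach the rough case $B\in L^1_tW^{1,\alpha}_{loc}$, I would establish an $\varepsilon$-uniform bound $\|I_\varepsilon(t,\cdot)\|_{L^\beta(K)} \lesssim \|\nabla B(t,\cdot)\|_{L^\alpha(K')} \|w(t,\cdot)\|_{L^p(K')}$ on compact subsets $K\subset\subset K'\subset\subset\R^d$, exploiting the pointwise estimate $|B(t,x+\varepsilon z)-B(t,x)|/\varepsilon \le |z|\,\mathcal{M}(|\nabla B(t,\cdot)|)(x)$ (valid for Sobolev functions via the fundamental theorem of calculus along the segment $[x,x+\varepsilon z]$), together with H\"older in $z$ and in $x$, and the Hardy--Littlewood maximal inequality. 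Then, approximating $B$ by smooth $B_k$ with $B_k\to B$ in $L^1_tW^{1,\alpha}_{loc}$ and decomposing $r_\varepsilon(B,w) = r_\varepsilon(B-B_k,w) + r_\varepsilon(B_k,w)$, the first piece is controlled by the uniform bound applied to $B-B_k$, which vanishes as $k\to\infty$ uniformly in $\varepsilon$, while the second vanishes as $\varepsilon\to 0$ for each fixed $k$ by the smooth case.

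The main technical obstacle is precisely this uniform bound in the rough regime: it is crucial that the condition $\alpha \ge p/(p-1)$ yields $\beta\ge 1$ (so that $L^\beta$ makes sense) and that the maximal function argument is available on the Sobolev side. Beyond this, there is the bookkeeping that everything is done locally, which is why the convergence is stated in $L^\beta_{loc}$; the cut-off near the boundary of a test compact set is harmless because for $\varepsilon$ small the mollification only reaches a slightly larger compact set, on which the integrability assumptions remain in force.
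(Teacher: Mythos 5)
The paper does not actually prove this proposition: it is quoted verbatim from DiPerna--Lions \cite[Lemma II.1]{diperna1989ordinary}, so the only comparison available is with the classical argument there. Your reconstruction follows that classical argument in all essential respects: the splitting $B\cdot\nabla w=\DIV(Bw)-w\DIV B$, the explicit kernel representation of the commutator, the rescaling $y=x+\eps z$, the smooth case via the moment identity $\int\partial_i\eta(-z)z_j\,\diff z=\delta_{ij}$, and the density argument resting on an $\eps$-uniform bilinear bound $\|I_\eps\|_{L^\beta(K)}\lesssim\|\nabla B\|_{L^\alpha(K')}\|w\|_{L^p(K')}$. The time variable is handled correctly by integrating the fixed-time bound and dominating.

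The one place you genuinely depart from DiPerna--Lions is the proof of the uniform bound, where you invoke the Hardy--Littlewood maximal function. Two remarks. First, the pointwise inequality as you wrote it is not correct: the Sobolev difference estimate reads $|B(x+\eps z)-B(x)|\lesssim\eps|z|\bigl(\mathcal{M}(|\nabla B|)(x)+\mathcal{M}(|\nabla B|)(x+\eps z)\bigr)$, with the maximal function appearing at both endpoints; with only $\mathcal{M}(|\nabla B|)(x)$ the estimate fails. Second, the maximal inequality requires $\alpha>1$, whereas the statement permits $\alpha=1$ (when $p=\infty$), so your route does not cover the full stated range. The classical argument avoids both issues by writing $B(x+\eps z)-B(x)=\eps\int_0^1\nabla B(x+s\eps z)\,z\,\diff s$ and applying Minkowski's integral inequality in $s$ and $z$ together with H\"older in $x$; this yields the same uniform bound with no maximal function and works for all $\alpha\geq p/(p-1)$. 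For the application in this paper ($\alpha=p>2$) your version suffices, but you should either fix the pointwise estimate and restrict to $\alpha>1$, or switch to the fundamental-theorem-of-calculus representation.
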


\begin{lem}\label{uniformlem}
Let $T>0$ and $(f_n)_{n\in\N}$ a sequence of continuous functions which converge to the continuous function $f$ in $L^1(0,T)$. Assume there is $C\in\R$ such that $f_n'\leq C$ and $f'\leq C$ in the sense of distributions, for each $n\in\N$. Assume moreover $f_n(0)\to f(0)$.

Then, for each $T'<T$, the convergence is uniform in $[0,T']$.
\end{lem}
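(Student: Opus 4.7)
The plan is to reduce to the monotone case by subtracting a linear function, then extract pointwise convergence from the $L^1$-convergence via a sandwich estimate, and finally upgrade to uniform convergence by a Pólya-type argument.

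Setting $g_n(t):=f_n(t)-Ct$ and $g(t):=f(t)-Ct$, the distributional bounds $f_n'\le C$ and $f'\le C$ translate into $g_n'\le 0$ and $g'\le 0$, so that $g_n$ and $g$ are (continuous) non-increasing functions on $[0,T)$. The hypotheses pass unchanged: $g_n\to g$ in $L^1(0,T)$ and $g_n(0)\to g(0)$, and recovering $f_n\to f$ uniformly will be equivalent to recovering $g_n\to g$ uniformly since $f_n-f=g_n-g$ identically.

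Next I would establish pointwise convergence $g_n(t)\to g(t)$ for every $t\in[0,T)$. For $t\in(0,T)$ and small $h>0$, monotonicity of $g_n$ yields the sandwich
\[
\frac{1}{h}\int_t^{t+h}g_n(s)\,\diff s\le g_n(t)\le \frac{1}{h}\int_{t-h}^{t}g_n(s)\,\diff s;
\]
passing $n\to\infty$ via the $L^1$-convergence, and then $h\to 0^+$ via continuity of $g$, forces $g_n(t)\to g(t)$. The endpoint $t=0$ is handled by the standing assumption. I anticipate this step to be the main obstacle: $L^1$-convergence in general does not imply pointwise convergence, but the monotonicity of $g_n$ combined with the continuity of the limit $g$ is precisely what rescues the situation.

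Finally, to upgrade pointwise convergence to uniform convergence on $[0,T']$ (with $T'<T$), I would run a Pólya-type argument: given $\varepsilon>0$, pick a partition $0=t_0<t_1<\dots<t_k=T'$ finer than the uniform continuity modulus of $g$ on $[0,T']$ corresponding to $\varepsilon$, and use pointwise convergence at the finitely many nodes $t_i$ together with monotonicity of $g_n$ to sandwich $g_n(t)$ between $g(t)\pm 2\varepsilon$ for all $n$ large enough. Uniform convergence of $g_n$ then transfers verbatim to $f_n\to f$ uniformly on $[0,T']$, completing the proof.
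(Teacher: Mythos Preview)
Your proof is correct and follows the same overall strategy as the paper's: reduce to the non-increasing case by subtracting $Ct$, obtain pointwise convergence at partition nodes, and run a P\'olya-type sandwich argument on $[0,T']$.

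The one genuine difference lies in how you obtain pointwise convergence. The paper extracts a subsequence along which $f_n\to f$ almost everywhere (from the $L^1$-convergence), then chooses the partition nodes within this full-measure set; since the limit is always $f$, a subsequence-of-subsequence argument closes the proof for the full sequence. You instead prove \emph{everywhere} pointwise convergence directly via the averaging sandwich
\[
\frac{1}{h}\int_t^{t+h} g_n\le g_n(t)\le \frac{1}{h}\int_{t-h}^t g_n,
\]
letting first $n\to\infty$ (by $L^1$-convergence of the averages) and then $h\to0^+$ (by continuity of $g$). This is slightly cleaner: it avoids the subsequence extraction and the final ``every subsequence has a further subsequence'' wrap-up, and it lets you place the terminal node at $T'$ itself rather than having to pick it in $(T',T)$ as the paper does.
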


\begin{proof}
Without loss of generality, we may assume $C=0$ (otherwise consider $f_n-Ct$ and $f-Ct$). In other words, the functions are non-increasing.

    Choosing a subsequence, we may 
    assume $f_n\to f$ almost everywhere. 
    
    Let $\epsilon>0$ and $\delta>0$ so small that $|f(t)-f(t')|<\frac\epsilon2$ whenever $|t-t'|<\delta$. Pick a finite partition $(t_k)\subset[0,T]$ of fineness at most $\delta$ and such that $f_n(t_k)\to f(t_k)$ as $n\to\infty$, for each $k$. We may take $t_0=0$ (by assumption) and assume that at least one $t_k$ lies in $(T',T)$.   
    
    Let $t\in[0,T']$ and pick $t_k\leq t$ such that $f(t_k)-f(t)<\frac{\epsilon}{2}$. Then,
    \begin{equation*}
    f_n(t)-f(t)\leq f_n(t_k)-f(t_k)+\frac{\epsilon}{2}<\epsilon 
    \end{equation*}
    for sufficiently large $n$. Analogously, choosing $t_k\geq t$, we have $f(t)-f_n(t)<\epsilon$ for sufficiently large $n$. As the attribute `sufficiently large' pertains only to the finitely many $t_k$, we see that $n$ can be chosen independently of $t$, and we obtain uniform convergence in $[0,T']$. 

    As every uniformly convergent subsequence has the same limit $f$, in fact the whole sequence converges uniformly.
\end{proof}

\bibliographystyle{plain}
\bibliography{2deuler}
\end{document}